\newcommand{\nbR}{\mathbb{R}}
\newcommand{\nbN}{\mathbb{N}}
\newcommand{\nbC}{\mathbb{C}}
\newcommand{\nbS}{\mathbb{S}}
\newcommand{\nbu}{\mathbbm{1}}
\newcommand{\nbP}{\mathbb{P}}
\newcommand{\nbE}{\mathbb{E}}
\newcommand{\Z}{{\cal Z}}
\newcommand{\M}{{\cal M}}
\newcommand{\ka}{\kappa}
\newtheorem{theorem}{{\bf Theorem}}
\newtheorem{lemm}[theorem]{{\bf Lemma}}
\newtheorem{cor}[theorem]{{\bf Corollary}}
\newtheorem{example}[theorem]{Example}
\newtheorem{prop}[theorem]{\bf Proposition}
\newcommand{\scrL}{\mathscr{L}}
\begin{document}

\begin{frontmatter}

\title{Analysis of the infinite server queues with semi-Markovian multivariate discounted inputs
}
\runtitle{Infinite server queues with semi-Markovian multivariate input}

\begin{aug}
\author{\fnms{\Large{Landy}}
\snm{\Large{Rabehasaina}}
\ead[label=e2]{lrabehas@univ-fcomte.fr}} \and \
\author{\fnms{\Large{Jae-Kyung}}
\snm{\Large{Woo}}
\ead[label=e1]{j.k.woo@unsw.edu.au}}
\runauthor{L.Rabehasaina and J.-K.Woo}

\address{\hspace*{0cm}\\
Laboratory of Mathematics, University Bourgogne Franche Comt\'e,\\
16 route de Gray, 25030 Besan\c con cedex, France.\\[0.2cm]
\printead{e2}}

\address{\hspace*{0cm}\\
School of Risk and Actuarial Studies, Australian School of Business,\\ University of New South Wales, Australia.\\[0.2cm]
\printead{e1}}
\end{aug}

\vspace{0.5cm}

\begin{abstract}
We consider a general $k$ dimensional discounted infinite server queues process (alternatively, an Incurred But Not Reported (IBNR) claim process) where the multivariate inputs (claims) are given by a $k$ dimensional finite state Markov Chain and the arrivals follow a renewal process. After deriving a multidimensional integral equation for the moment generating function jointly to the state of the input at time $t$ given the initial state of the input at time 0, asymptotic results for the first and second (matrix) moments of the process are provided. In particular, when the interarrival or service times are exponentially distributed, transient expressions for the first two moments are obtained. Also, the moment generating function for the process with deterministic interarrival times is considered to provide more explicit expressions. Finally, we demonstrate the potential of the present model by showing how it allows us to study a semi-Markov modulated infinite server queues where the customers (claims) arrival and service (reporting delay) times depend on the state of the process immediately before and on the switching times.

\end{abstract}
\begin{keyword}
Semi-Markovian multivariate discounted inputs, Infinite server queues, IBNR process, Markov modulation
\end{keyword}

\end{frontmatter}

\normalsize

%

\section{Introduction}\label{sec:intro}
 
In the context of an infinite server queue with correlated batch arrivals, the total number of customers still in the system is related to an aggregation of correlated risks (multivariate risks) where the arrival times of those risks are adjusted by adding a random delay. Without this time delay, probability modeling of aggregate risk processes has been studied in various areas such as applied probability, reliability theory, and actuarial science. In particular, the research on an aggregation of correlated risks is striving to develop techniques to estimate the combined effect of different types of risks on the infrastructure or system. 

In an infinite server queue with correlated batch arrivals, the random batch size is multivariate and the service time distribution is dependent on the type of input (queue). A similar idea but with multiple Markovian batch arrival streams can be found in \cite{MT02} where a time-dependent matrix joint generating function of the number of customers in the system was derived. In a renewal process with correlated batch arrivals, \cite{W16} provides the transient expressions for the joint moments of the number of customers in an infinite server queue which are recursively obtainable. Then, \cite{RW17} develop asymptotic approximation methods to study these joint moments and also provided some queueing theoretic applications, including the workload of the queue and infinite server queues in tandem.

It is worthwhile to mention that research on the actuarial application to the Incurred But Not Reported (IBNR) claims exists in much the same way as an analysis of the infinite server queues with batch arrivals, see e.g. \cite{W90, WD01, W16}. As discussed in \cite{K74}, the service times in an infinite server queue can be interpreted as the time lag between the occurrence of a claim and the report of that claim in the IBNR model. In this respect, the proposed model in the present paper can be utilized to analyze different quantities of interest in (at least) two areas including queueing theory and actuarial science.

The present paper considers the model which is an extension of the one given in \cite{RW17}. In each batch arrival, the model consists of multivariate queues (claims) which are modeled by some finite Markov Chain, and a renewal process is assumed for batch arrivals. The Markovian assumption for a vector of queues (claims) enable us to study the infinite server queues (IBNR process) in more realistic situations such that arrival times and service times are dependent on the states of the external semi-Markovian process immediately before and on the switching times, as will be illustrated later in Section \ref{sec:application}. 
It is natural to model that the arrival process and service time are modulated by some external process, in particular, when this process impacts on the intensity of claim arrival processes and in turn, the type of service times. For example, the number of multiple types of claims in catastrophe insurance varies depending on the environmental condition and also it could lead to different types of reporting/settlement time delays.
In addition, this model provides the capability to incorporate the information of successive states of the external background process before and on the switching times, which accommodates a link between consecutive states around the jump times. Moreover, when the batch sizes are regarded as claim amounts a discount factor introduced in the model is certainly important to study a present value of total IBNR claim amounts. Also, as shown in \cite{RW17} it allows us to analyze the workload and covariance of the workload and queue size, and reinterpret Little's law in the presence of a positive discount factor.

To the best of our knowledge, there is no study of the current setting of the model (especially in the presence of a discount factor) in the literature of queueing theory or actuarial science. Instead, similar settings of the model such as Markov modulated infinite server systems are found. For example, in \cite{BKMT14} the particle arrives according to a Poisson process and the Poisson arrival rate and the distribution of service times are dependent on the state of an external Markov process (background process). When the interarrival times in our model is exponential, the one-dimensional case in Section \ref{sec:application} is similar to the one studied in \cite{BKMT14}. In a system with multiple infinite server queues, \cite{MDT16} consider that both the arrival rates and the parameter of the exponentially distributed service times are modulated by a common background process. In \cite{BDTM17}, a similar model but a single queue with a Poisson arrival is revisited to study the asymptotic behavior of the number of customers in the system in the large-deviations regime. In some papers, arrival and service rates in an infinite server queue are governed by an external semi-Markov process. See \cite{D08} and \cite{FA09} for instance. \cite{OP86} study $M/M/\infty$ queue model modulated by an external continuous-time Markov Chain. 

The remainder of the paper is structured as follows: In Section \ref{sec:model}, we provide a description of the mathematical model.  After deriving some general results on the (joint) moment generating function (mgf)/Laplace Transform (LT) and the first two moments in Section \ref{general_results}, we show in Section \ref{sec:particular_cases} that the limiting second-order joint moments are explicitly available when  the service times are (potentially degenerated) exponentially distributed and the transient moments are obtainable when the interarrival times are exponentially distributed. Some numerical illustrations for the limiting behaviour of the first and second joint moments are provided at the end of Section \ref{subsec:expo_service_times}. Section \ref{sec:deterministic_interarriva} is concerned with the particular case of deterministic interarrival times, where we show that the mgf has a simple expression as a matrix product (see Theorem \ref{theo_deterministic_psi}). Application to a model related to the infinite server queues is provided in Section \ref{sec:application}. It is assumed that the infinite server queues are modulated by an external semi-Markovian process, such that arrival times and service times depend on the state of the modulating process immediately before and after it switches states.

\section{Model description}\label{sec:model}
Let $\{ N_t,\ t\ge 0\}$ be a renewal process associated with a non decreasing sequence $(T_i)_{i\in\nbN}$ with $T_0=0$, such that $(T_i-T_{i-1})_{i\ge 1}$ is independent and identically distributed (iid). Also let $\tau=T_1$ with a cumulative distribution function (cdf) $F(x)=1-\bar{F}(x)$ and the LT ${\cal L}^\tau (u)=\nbE(e^{-u\tau })$ for $u\geq 0$. We introduce a stationary ergodic finite Markov Chain $(X_i)_{i\in\nbN}$ with a state space ${\cal S}=\{ 0,...,K\}^k$ for some $K\in \nbN$ and $k\in\nbN^*=\nbN\!\setminus\!\{0\}$, so that $X_i$ is for all $i$ of the form $X_i=(X_{i1},...,X_{ik})$ with $X_{ij}\in \{ 0,...,K\}$ for $j=1,...,k$. Then for $\alpha\ge 0$, the discounted process $\{Z(t)=Z(t;\alpha)\in \nbR^k ,\ t\ge 0 \}$ is a vector of $k$ processes $Z(t)=(Z_1(t),...,Z_k(t))$ with each process defined as
\begin{equation}\label{def_Z_t}
Z_j(t):=\sum_{i=1}^{N_t}X_{ij}e^{-\alpha (L_{ij}+T_i)}\nbu_{[t<L_{ij}+T_i]}=\sum_{i=1}^{\infty}X_{ij}e^{-\alpha (L_{ij}+T_i)}\nbu_{[T_i\le t<L_{ij}+T_i]},
\end{equation}
where $(L_{ij})_{i\in \nbN, j=1,...,k}$ is a sequence of independent random variables (rvs) such that $(L_{i1},...,L_{ik})_{i\in \nbN}$ is iid (although $L_{i1}$,...,$L_{ik}$ may have different distributions). We set $(L_1,...,L_k)$ to be a generic random vector distributed as the $(L_{i1},...,L_{ik})$'s, with each $L_j$ having the LT denoted by ${\cal L}_j(u)=\nbE(e^{-u L_j})$ for $u\ge 0$. As in \cite{RW17}, we let $\tilde{Z}(t)=\tilde{Z}(t;\alpha):=e^{\alpha t}Z(t;\alpha)$. The processes described in \eqref{def_Z_t} are viewed as different quantities of interest in the following two areas. 
 In queueing theory, and especially when $\alpha=0$, $X_{ij}$ represents the number of customers arriving in queue $j\in\{1,...,k\}$ at time $T_i$, each of those customers with same service time $L_{ij}$. In actuarial science, when the severities of claims of different types occurring due to a common accident or catastrophe event and there are some time delays for insurers to hear (or settle) these claims, $X_{ij}$ represents amounts of $j$-type of claim arriving at the time $T_i$ and $X_i$ is a vector of multivariate claims caused by this $i$th event. Since the present value of each claim amount is calculated by discount claim amounts $X_{ij}$ at the actual realization time $T_i+ L_{ij}$, $Z_j(t)$ in (\ref{def_Z_t}) is regarded as discounted IBNR amounts of $j$-type of claim by time $t$ and $Z(t)$ is a vector of multivariate discounted IBNR claim processes involving $k$ types of claims. Hence, $X_{ij}$ and $L_{ij}$ will in what follows be invariably referred to the claim/batch sizes and delay/service times respectively. In particular, when $\alpha=0$, this model is a generalization of the Model II in \cite{MDT16} which considers the case where $\tau$ is exponentially distributed, i.e. when the set of queues is modulated by a common continuous-time Markov Chain, as will be discussed in Section \ref{sec:particular_cases}.

A few words describing the technical difficulties that stand out against those in \cite{RW17}. Although the structure of the latter paper is similar to the present one, the correlation structure of the $X_i$'s (namely, forming a Markov Chain) considerably increases the technical challenges leading to the results proved here. For example, the iid assumption on $X_i$'s enables us to determine all joint moments of $Z_1(t),...,Z_k(t)$ recursively in \cite{RW17}, whereas some matrix issues result in the expressions only available for the first two first joint moments in this paper, see Theorems and \ref{theo_second_limit_moment_expo} and \ref{theo_second_moment_Poisson}. Besides, contrarily to \cite{RW17}, we prove in Theorem \ref{theo_deterministic_psi} that in a particular Markovian setting with constant interarrival times the distribution of the joint vector $(\tilde{Z}(t),X_{N_t})$ is converging and an explicit expression for the limiting distribution is available. Finally, the application of the model described in Section \ref{sec:application} is novel in a sense that, to the best of our knowledge, infinite server queues (IBNR process) where service time (reporting delay) for incoming customers (claims) depend on both states of the modulating exterior process at the switching time and prior to this time were not analytically studied in the literature.

{\bf Notation.} Let $P=(p(x,x'))_{(x,x')\in {\cal S}^2}$ and $\pi=(\pi(x))_{x\in {\cal S}}$ (written as a row vector) be respectively the transition matrix and stationary distribution of the Markov Chain. We next define for all $r\ge 0$ and $s=(s_1,...,s_k)\in \nbR^k $,
\begin{align}
\tilde{\pi}(s,r)&:= \displaystyle\mbox{diag}\bigg[ \nbE \bigg( \exp\bigg\{ \sum_{j=1}^k s_jx_j e^{-\alpha (L_j-r)}\nbu_{[L_j>r]}\bigg\}\bigg),\ x=(x_1,...,x_k)\in{\cal S}\bigg],\label{def_pi_Q_tilda}\\
\tilde{Q}(s,r)&:= \displaystyle\tilde{\pi}(s,r) P',\label{def_pi_Q_tilda1}
\end{align}
where $P'$ denotes the transpose of matrix $P$. We also introduce some notations in the following. $I$ is the identity matrix, ${\bf 0}$ is a column vector with zeroes, and ${\bf 1}$ is a column vector with $1$'s, of appropriate dimensions. When a random variable (rv) $X$ is exponentially distributed with mean $1/\beta$, it is denoted as $X\sim \mathcal{E}(\beta)$. Also, we let the ${\cal S}\times {\cal S}$ diagonal matrices
\begin{eqnarray}
\Delta_j&:=&\mbox{diag} \left[ x_j,\ x=(x_1,...,x_k)\in{\cal S}\right],\quad j=1,...,k,\label{Di}\\
\Delta_\pi&:=&\mbox{\normalfont diag}(\pi(x),\ x\in {\cal S}),\nonumber
\end{eqnarray}
and $\delta$ is used to denote the Kronecker symbol, e.g. $\delta_{x,y}$ equals to $1$ iff $x=y$ and $0$ else. The mgf of the process $\tilde{Z}(t)=\tilde{Z}(t;\alpha)$ jointly to the state of $X_{N_t}$ given  the initial state of $X_0$ is denoted by
\begin{equation}\label{def_mgf}
\tilde{\psi}(s,t)=\tilde{\psi}(s,t;\alpha):=\left[ \nbE\left( \left. e^{<s,\tilde{Z}(t)>}\nbu_{[X_{N_t}=y]}\right| X_0=x\right)\right]_{(x,y)\in {\cal S}^2},\quad t\ge 0,
\end{equation}
where $<\cdot, \cdot>$ denotes the scalar product on $\nbR^k$. Note that $s=(s_1,...,s_k)$ is assumed to be such that $s_j\in \nbR$ for all $j=1,...,k$ so that \eqref{def_mgf} is well defined, i.e. the expectation is finite. Definition \eqref{def_mgf} may include the case where the $s_j$'s are complex and purely imaginary, in which case $\tilde{\psi}(s,t)$ is the characteristic function of $\tilde{Z}(t)$ jointly to $X_{N_t}$; this will particularly be the case in the proof of Theorem \ref{theo_deterministic_psi}. Note also that $X_0$ in \eqref{def_mgf} has no direct physical interpretation here, as the batch sizes/claims sizes are given by $X_i$, $i\ge 1$, and is rather introduced for technical purpose. We define the first and second (matrix) moments of $ \tilde{Z}(t)$ jointly to the state of the Markov Chain $X_{N_t}$ given the initial state $X_0$ as
\begin{equation}\label{def_moments}
\begin{array}{rcl}
M_j(t)&:=&\left[\nbE \left(\left.\tilde{Z}_j(t)\nbu_{[X_{N_t}=y]}\right| X_0=x\right)\right]_{(x,y)\in {\cal S}^2},\quad j=1,... ,k ,\\
M_{jj'}(t)&:=&\left[\nbE \left(\left.\tilde{Z}_j(t)\tilde{Z}_{j'}(t)\nbu_{[X_{N_t}=y]}\right| X_0=x\right)\right]_{(x,y)\in {\cal S}^2},\quad j,j'=1,... ,k, 
\end{array}
\end{equation}
respectively. We remark that the mgf defined in (\ref{def_mgf}) is different from the one studied in \cite{RW17} which does not consider Markovian assumption for a vector $X_i$ and joint structure with the state $X_{N_t}$ conditioning on the initial state $X_0$.

This introductory section is completed by giving some results of independent interest that will be used in the rest of the paper. The following lemma is important for some computations on Markov Chains, which may be found in \cite[Lemma 1]{FG04}:
\begin{lemm}\label{lemma_compute_Mk}
Let $(S_n)_{n\in\nbN}$ be a stationary Markov Chain with a state space $E$, the transition matrix $P$ and (stationary) distribution $\pi=(\pi(x))_{x\in E}$. For all functions $f_1$,...,$f_{l+1}$ we have
\begin{equation}\label{compute_Mk}
\nbE(f_1(S_1)\cdots f_l(S_l))={\bf 1} ' \prod_{i=0}^{l-2} Q_{f_{l-i}} \pi_{f_1},
\end{equation}
where $Q_{f_i}:=\mbox{diag}(f_i(z),\ z\in E)P'$ for $i=1,...,l$, and $\pi_{f_1}:=\mbox{diag}(f_1(z),\ z\in E)\pi'$.
\end{lemm}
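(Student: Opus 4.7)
The plan is to expand the expectation by the Markov property and then recognize the resulting nested sum as the stated matrix product. First I would use stationarity together with the one-step transitions to write
$$\nbE(f_1(S_1)\cdots f_l(S_l)) = \sum_{x_1,\ldots,x_l \in E} \pi(x_1) f_1(x_1) \prod_{j=1}^{l-1} p(x_j, x_{j+1}) f_{j+1}(x_{j+1}).$$
This is the only probabilistic input; everything else is linear algebra.

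Next I would reinterpret this sum as a product of matrices acting on a column vector. Reading off entries: $\pi_{f_1}$ is the column vector whose component at index $z$ equals $f_1(z)\pi(z)$, while $Q_{f_i} = \mbox{diag}(f_i(\cdot))P'$ has $(y,z)$-entry equal to $f_i(y)\,p(z,y)$. Starting with $\pi_{f_1}$, left-multiplication by $Q_{f_2}$ performs the sum over $x_1$ and produces a column vector indexed by $x_2$; applying $Q_{f_3}$ then sums over $x_2$; continuing up to $Q_{f_l}$ leaves a column vector indexed by $x_l$, and left-multiplication by ${\bf 1}'$ finally sums over $x_l$. Note that in the stated product $\prod_{i=0}^{l-2} Q_{f_{l-i}}$ the factors appear in the order $Q_{f_l} Q_{f_{l-1}} \cdots Q_{f_2}$, which matches precisely the order of the matrix multiplications above.

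A clean formal argument is induction on $l$. For the base case $l=2$ one checks directly that ${\bf 1}' Q_{f_2} \pi_{f_1} = \sum_{y,z} f_2(y) p(z,y) f_1(z) \pi(z) = \nbE(f_1(S_1) f_2(S_2))$; for the inductive step one conditions on $S_l$, factors out the leftmost $Q_{f_l}$, and applies the induction hypothesis to the chain $S_1,\ldots,S_{l-1}$ with the modified final weight $z \mapsto f_{l-1}(z)\sum_y p(z,y) f_l(y)$. The only subtlety I anticipate is getting the indexing conventions consistent, in particular the appearance of $P'$ rather than $P$: this reflects the fact that propagating a vector of state-indexed weights one step forward in time corresponds to left multiplication by $P'$ rather than by $P$. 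Once that convention is fixed, the rest is pure bookkeeping.
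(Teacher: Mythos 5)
Your proof is correct. The paper itself does not prove this lemma but cites it verbatim as \cite[Lemma 1]{FG04}, so there is no in-paper argument to compare against; your direct expansion of the finite-dimensional distribution via the Markov property, followed by reading off the nested sums as a product of the $Q_{f_i}$'s (or, equivalently, the induction on $l$ conditioning on $S_l$), is the standard way to establish it and matches what one would find in the cited reference. One small bookkeeping check that you handled correctly and is worth flagging as the only place an error would hide: with $[Q_{f_i}]_{y,z}=f_i(y)p(z,y)$, left-multiplying a $z$-indexed column vector by $Q_{f_i}$ sums over the old index $z$ and produces a $y$-indexed vector, so the time order of the chain really does correspond to the matrix order $Q_{f_l}Q_{f_{l-1}}\cdots Q_{f_2}$ appearing in $\prod_{i=0}^{l-2}Q_{f_{l-i}}$, and the transpose $P'$ is exactly what makes forward time propagation a \emph{left} multiplication. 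Your base case ${\bf 1}'Q_{f_2}\pi_{f_1}=\sum_{y,z}f_2(y)p(z,y)f_1(z)\pi(z)=\nbE(f_1(S_1)f_2(S_2))$ and the inductive step replacing $f_{l-1}$ by $z\mapsto f_{l-1}(z)\sum_y p(z,y)f_l(y)$ both check out.
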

The following is a direct consequence of \eqref{compute_Mk}. Let $e_x$ (resp. $e_y$) be the column vector of which $z$th entry is $\delta_{x,z}$ (resp. $\delta_{y,z}$). One has then for all $x$, $y$ in $E$ that
$$
\nbE(f_1(S_1)\cdots f_l(S_l)\nbu_{[S_{l}=y]}|\ S_1=x)=e_y' \prod_{i=0}^{l-2} Q_{f_{l-i}} \mbox{diag}(f_1(z),\ z\in E) e_x , 
$$
which, because it is a scalar, is equal to its transpose, i.e.
\begin{equation*}\label{compute_Mk2}
\nbE(f_1(S_1) \cdots f_l(S_l)\nbu_{[S_{l}=y]}|\ S_1=x)=e_x' \mbox{diag}(f_1(z),\ z\in E) \prod_{i=2}^{l} Q_{f_i} ' e_y ,
\end{equation*}
which immediately implies the following corollary. 
\begin{cor}\label{coro_compute_Mk}
Under the same notation as in Lemma \ref{lemma_compute_Mk}, one has the matrix equality
\begin{equation*}\label{compute_Mk3}
\left[ \nbE(f_1(S_1)\cdots f_l(S_l)\nbu_{[S_{l}=y]}|\ S_1=x)\right]_{(x,y)\in E^2}=\mbox{\normalfont diag}(f_1(z),\ z\in E) \prod_{i=2}^{l} Q_{f_i} ' .
\end{equation*}
\end{cor}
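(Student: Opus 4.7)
The plan is to obtain the corollary as a direct restatement of the scalar identity displayed just before its statement. That identity already expresses, for fixed $x,y\in E$, the conditional expectation $\nbE(f_1(S_1)\cdots f_l(S_l)\nbu_{[S_l=y]}|\ S_1=x)$ in the form $e_x' A e_y$, where $A=\mbox{diag}(f_1(z),\ z\in E)\prod_{i=2}^{l}Q_{f_i}'$. Since $e_x'Ae_y$ is by definition the $(x,y)$ entry of the matrix $A$, assembling these scalars over $(x,y)\in E^2$ into an $|E|\times|E|$ array simply recovers $A$, which is the identity claimed by the corollary.

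For completeness I would first sanity check the scalar identity as a consequence of Lemma \ref{lemma_compute_Mk}. The idea is to specialize the lemma by replacing the stationary initial law with a point mass at $x$ and incorporating the indicator $\nbu_{[S_l=y]}$ into the final summation. Concretely, conditioning on $S_1=x$ replaces $\pi$ by the Dirac mass $\delta_x$ inside $\pi_{f_1}=\mbox{diag}(f_1(z))\pi'$, so that $\pi_{f_1}$ becomes $\mbox{diag}(f_1(z))e_x$; likewise, inserting $\nbu_{[S_l=y]}$ before taking expectation replaces the row vector ${\bf 1}'$ with $e_y'$. This yields $\nbE(f_1(S_1)\cdots f_l(S_l)\nbu_{[S_l=y]}|\ S_1=x)=e_y'\prod_{i=0}^{l-2}Q_{f_{l-i}}\,\mbox{diag}(f_1(z))\,e_x$, a scalar that therefore equals its own transpose.

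Transposing then uses $(ABC)'=C'B'A'$ together with the fact that $\prod_{i=0}^{l-2}Q_{f_{l-i}}=Q_{f_l}Q_{f_{l-1}}\cdots Q_{f_2}$ transposes to $Q_{f_2}'Q_{f_3}'\cdots Q_{f_l}'=\prod_{i=2}^{l}Q_{f_i}'$, producing $e_x'\,\mbox{diag}(f_1(z))\,\prod_{i=2}^{l}Q_{f_i}'\,e_y$. Reading this off as the $(x,y)$ entry of a matrix concludes the proof. There is essentially no obstacle here; the exercise is pure bookkeeping. The only point deserving attention is the reversal of index order induced by transposing a matrix product, which is exactly why the product in the corollary runs upward from $i=2$ to $i=l$ rather than in the reverse direction appearing in Lemma \ref{lemma_compute_Mk}.
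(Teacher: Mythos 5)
Your proof is correct and takes essentially the same route as the paper: specialize the lemma by replacing ${\bf 1}'$ with $e_y'$ and $\pi'$ with $e_x$, transpose the resulting scalar to reverse the product order, and read off $e_x' A e_y$ as the $(x,y)$ entry of $A$. No gaps.
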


\section{General results}\label{general_results}

\subsection{The Laplace transform}\label{sec:general_results}
The aim of this subsection is to establish some properties verified by the mgf $\tilde{\psi}(s,t)$ in (\ref{def_mgf}).
\begin{prop}\label{prop_psi_tilda}
The mgf of $\tilde{Z}(t)$ defined by \eqref{def_mgf} satisfies
\begin{equation}\label{eq_psi_tilda}
\tilde{\psi}(s,t)=\nbE\bigg(  \prod_{i=1}^{N_t} \tilde{Q}(s,t-T_i)'\bigg)=\bar{F}(t) I + \nbE\bigg( \nbu_{[N_t>0]} \prod_{i=1}^{N_t} \tilde{Q}(s,t-T_i)'\bigg)
\end{equation}
for all $s\in\nbR^k$, $t\ge 0 $, with the usual convention $\prod_{i=1}^{N_t} \tilde{Q}(s,t-T_i)'=I$ if $N_t=0$. Besides, it satisfies the following multidimensional integral equation:
\begin{equation}\label{ren_eq_psi_tilda}
\tilde{\psi}(s,t)=\bar{F}(t)I+ \int_0^t \tilde{Q}(s,t-y)'  \tilde{\psi}(s,t-y) dF(y),\quad \forall s\in\nbR^k,\ t\ge 0.
\end{equation}
\end{prop}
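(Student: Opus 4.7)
The plan is to peel off the expectation in \eqref{def_mgf} in three stages—first over the service times $L_{ij}$, then over the Markov chain $(X_i)$, and finally over the renewal data $(N_t,T_1,\dots,T_{N_t})$—to obtain the first equality in \eqref{eq_psi_tilda}; the second equality is just the $N_t=0$ vs.\ $N_t\ge 1$ split; and the integral equation \eqref{ren_eq_psi_tilda} then follows by conditioning on $T_1$ and using the regenerative structure of the model.

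\noindent Concretely, rewriting $\tilde Z_j(t)=\sum_{i=1}^{N_t}X_{ij}e^{-\delta(L_{ij}-(t-T_i))}\nbu_{[L_{ij}>t-T_i]}$, the exponential $e^{\langle s,\tilde Z(t)\rangle}$ factorises over $i=1,\dots,N_t$. Since the vectors $(L_{i1},\dots,L_{ik})_{i\ge 1}$ are iid and independent of $(T_i)$ and $(X_i)$, conditioning on $X_0=x$, $(N_t,T_1,\dots,T_{N_t})$ and $(X_1,\dots,X_{N_t})$ gives
\begin{equation*}
\nbE\!\left(e^{\langle s,\tilde Z(t)\rangle}\nbu_{[X_{N_t}=y]}\,\big|\,X_0=x,N_t,(T_i),(X_i)\right)=\nbu_{[X_{N_t}=y]}\prod_{i=1}^{N_t}g(X_i,t-T_i),
\end{equation*}
where $g(x,r):=\nbE\bigl(\exp\{\sum_j s_j x_j e^{-\delta(L_j-r)}\nbu_{[L_j>r]}\}\bigr)$ is exactly the $(x,x)$ diagonal entry of $\tilde\pi(s,r)$ defined in \eqref{def_pi_Q_tilda}.

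\noindent The key step is then to compute, for fixed $N_t$ and $T_i$'s, the matrix
\begin{equation*}
\left[\nbE\!\left(\prod_{i=1}^{N_t}g(X_i,t-T_i)\,\nbu_{[X_{N_t}=y]}\,\Big|\,X_0=x\right)\right]_{(x,y)\in\mathcal{S}^2}.
\end{equation*}
A direct summation $\sum_{z_1,\dots,z_{N_t-1}}p(x,z_1)g(z_1,t-T_1)p(z_1,z_2)g(z_2,t-T_2)\cdots p(z_{N_t-1},y)g(y,t-T_{N_t})$ identifies this with $[P\,\tilde\pi(s,t-T_1)]\cdots[P\,\tilde\pi(s,t-T_{N_t})]=\prod_{i=1}^{N_t}\tilde Q(s,t-T_i)'$ (one may alternatively apply Corollary \ref{coro_compute_Mk} after a trivial index shift, treating the start at $X_0=x$ rather than $S_1=x$). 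Averaging finally over $(N_t,T_i)$ yields the first equality of \eqref{eq_psi_tilda}, and isolating the event $\{N_t=0\}$ (on which the empty product is $I$ and $\nbP(N_t=0)=\bar F(t)$) yields the second. The only mild subtlety here is the off-by-one in the indexing of the Markov chain relative to the statement of Corollary \ref{coro_compute_Mk}, which is easy to handle.

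\noindent For \eqref{ren_eq_psi_tilda}, I condition on $T_1$ in the representation just obtained. On $\{T_1>t\}$ we recover $\bar F(t)I$. On $\{T_1=u\le t\}$, the strong renewal property gives that $(T_{i+1}-u)_{i\ge 1}$, $(X_{i+1})_{i\ge 0}$ and $(L_{i+1,j})_{i\ge 0}$ are distributionally a fresh independent copy of the model, so
\begin{equation*}
\prod_{i=1}^{N_t}\tilde Q(s,t-T_i)'=\tilde Q(s,t-u)'\prod_{i=1}^{\tilde N_{t-u}}\tilde Q(s,(t-u)-\tilde T_i)',
\end{equation*}
where the tilded quantities form the regenerated process. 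Taking expectation of the second factor produces $\tilde\psi(s,t-u)$, and integrating against $dF(u)$ on $[0,t]$ gives \eqref{ren_eq_psi_tilda}. The main obstacle is bookkeeping: making sure that the Markov chain's memorylessness is invoked correctly so that after conditioning on $X_1$ the regenerated chain really does restart with law governed by $P$ independently of $X_0$, which is automatic from the Markov property but needs to be stated cleanly.
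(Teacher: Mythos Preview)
Your proposal is correct and follows essentially the same approach as the paper's proof: both condition on the renewal data, use the Markov structure of $(X_i)$ to identify the conditional expectation as the matrix product $\prod_{i=1}^{N_t}\tilde Q(s,t-T_i)'$ (the paper invokes Corollary~\ref{coro_compute_Mk} with the index shift $S_i=X_{i-1}$, while you give the equivalent direct summation), and then derive \eqref{ren_eq_psi_tilda} by conditioning on $T_1$. The only cosmetic difference is that you integrate out the $L_{ij}$'s first to isolate the diagonal entries $g(x,r)$ of $\tilde\pi(s,r)$, whereas the paper absorbs this step into the application of the corollary.
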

\begin{proof}
Decomposing according to $N_t=0$ and $N_t>0$ yields that
\begin{equation}\label{decompose_psi}
\tilde{\psi}(s,t)=\left[ \nbP(X_0=y|X_0=x)\nbP(N_t=0)\right]_{(x,y)\in {\cal S}^2} + \left[ \nbE\left( \left. e^{<s,\tilde{Z}(t)>}\nbu_{[X_{N_t}=y]} \nbu_{[N_t>0]}\right| X_0=x\right) \right]_{(x,y)\in {\cal S}^2}.
\end{equation}
Note that $\nbP(X_0=y|X_0=x)\nbP(N_t=0)=\delta_{x,y} \bar{F}(t)$, so that the first term on the right-hand side of \eqref{decompose_psi} is given by the the first term on the right-hand side of \eqref{eq_psi_tilda}. We turn to the second term on the right-hand side of \eqref{decompose_psi}.
Let us define ${\cal F}=\sigma (T_i,\ i\in \nbN)$ the sigma field generated by $T_i$'s for $i\in \nbN$ as well as the set of rvs
$$
s_t(x,y):= \nbE\left( \left. e^{<s,\tilde{Z}(t)>}\nbu_{[X_{N_t}=y]}\nbu_{[N_t>0]}\right| X_0=x,\ {\cal F}\right),\quad x,y \in {\cal S},
$$
where $s=(s_1,...,s_k)\in \nbR^k$ is fixed throughout the proof. Using $\tilde{Z}(t)=e^{\alpha t}Z(t;\alpha)$ and \eqref{def_Z_t}, one obtains
\begin{equation}\label{compute_s_t1}
s_t(x,y)=  \nbE\bigg( \nbu_{[N_t>0]}\prod_{i=1}^{N_t}\exp \bigg\{ \sum_{j=1}^k s_j X_{ij} e^{-\alpha (L_{ij}-(t-T_i))}\nbu_{[L_{ij}>t-T_i]}\bigg\}\nbu_{[X_{N_t}=y]}\bigg| X_0=x,\ {\cal F}\bigg).
\end{equation}
In order to compute $s_t(x,y)$, we use the fact that the Markov Chain $(X_i)_{i\in\nbN}$ is independent from ${\cal F}$ and $(L_{ij})_{i\in\nbN, j=1,..., k}$. Using the result in Corollary \ref{coro_compute_Mk} with replacement of $S_i:= X_{i-1}$, $f(S_1)=f(X_0)=1$ and $f_i(S_i)=f_i(X_{i-1}):=\exp \left\{ \sum_{j=1}^k s_j X_{i-1,j} e^{-\alpha (L_{(i-1),j}-(t-T_{i-1}))}\nbu_{[L_{i-1,j}>t-T_{i-1}]}\right\}$ for $i=2,...,l$ when $l=N_t+1$,  \eqref{compute_s_t1} may be expressed as
$$
\left[ s_t(x,y)\right]_{(x,y)\in {\cal S}^2}= \nbu_{[N_t>0]}\ I .\prod_{i=2}^{N_t+1} \tilde{Q}(s,t-T_{i-1})' =\nbu_{[N_t>0]}\prod_{i=1}^{N_t} \tilde{Q}(s,t-T_i)',
$$
where we recall that $I$ is the identity matrix and $\tilde{Q}(.,.)$ is defined in (\ref{def_pi_Q_tilda1}). Since $\nbE([s_t(x,y)]_{(x,y)\in {\cal S}^2} )$ is the second term in the right-hand side of \eqref{decompose_psi}, one thus obtains \eqref{eq_psi_tilda}.

Finally, \eqref{ren_eq_psi_tilda} is obtained by considering again $[N_t=0]\iff [T_1>t]$ and $[N_t>0]\iff [T_1\le t]$ and conditioning with respect to $T_1$.
\end{proof}
It is known that a multidimensional integral equation such as \eqref{ren_eq_psi_tilda} cannot be solved in general. One particular case is when arrivals occur according to a Poisson process, in which case one has the following result.
\begin{prop}\label{prop_Poisson_psi}
If $\tau\sim {\cal E}(\lambda)$ for $\lambda>0$, then $\tilde{\psi}(s,t)$ is the unique solution to the first-order linear (matrix) differential equation
\begin{equation}\label{Poisson_ODE}
\partial_t \tilde{\psi}(s,t) = [-\lambda I+ \lambda\tilde{Q}(s,t)']\tilde{\psi}(s,t)=[\lambda (P-I) + \lambda P(\tilde{\pi}(s,t)-I)]\tilde{\psi}(s,t)
\end{equation}
with the initial condition $\tilde{\psi}(s,0)= I$ from (\ref{eq_psi_tilda}).
\end{prop}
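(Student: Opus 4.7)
The plan is to specialise the renewal-type integral equation \eqref{ren_eq_psi_tilda} to the exponential case and convert it into a differential equation by the standard trick of multiplying by $e^{\lambda t}$ and differentiating.

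First I would substitute $\bar F(t)=e^{-\lambda t}$ and $dF(y)=\lambda e^{-\lambda y}\,dy$ into \eqref{ren_eq_psi_tilda} to get
\begin{equation*}
\tilde\psi(s,t)=e^{-\lambda t} I+\lambda\int_0^t e^{-\lambda y}\tilde Q(s,t-y)'\,\tilde\psi(s,t-y)\,dy.
\end{equation*}
Changing variable $u=t-y$ and factoring out $e^{-\lambda t}$ yields
\begin{equation*}
e^{\lambda t}\tilde\psi(s,t)=I+\lambda\int_0^t e^{\lambda u}\tilde Q(s,u)'\,\tilde\psi(s,u)\,du.
\end{equation*}
The right-hand side is manifestly differentiable in $t$ (the integrand is continuous in $u$, since $\tilde Q(s,\cdot)'$ and $\tilde\psi(s,\cdot)$ are continuous by \eqref{eq_psi_tilda}), so the left-hand side is too, and differentiating both sides gives
\begin{equation*}
\lambda e^{\lambda t}\tilde\psi(s,t)+e^{\lambda t}\partial_t\tilde\psi(s,t)=\lambda e^{\lambda t}\tilde Q(s,t)'\,\tilde\psi(s,t),
\end{equation*}
from which the first equality in \eqref{Poisson_ODE} follows after dividing by $e^{\lambda t}$.

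For the second equality I would use the definitions \eqref{def_pi_Q_tilda}--\eqref{def_pi_Q_tilda1}: since $\tilde\pi(s,t)$ is diagonal, $\tilde Q(s,t)'=(\tilde\pi(s,t)P')'=P\tilde\pi(s,t)$, and then the elementary algebraic identity
\begin{equation*}
-\lambda I+\lambda P\tilde\pi(s,t)=\lambda(P-I)+\lambda P\bigl(\tilde\pi(s,t)-I\bigr)
\end{equation*}
gives the claim. The initial condition $\tilde\psi(s,0)=I$ is read off directly from \eqref{eq_psi_tilda}, since $\bar F(0)=1$ and $N_0=0$ a.s.

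Finally, uniqueness is not really an obstacle: the matrix-valued map $t\mapsto -\lambda I+\lambda\tilde Q(s,t)'$ is continuous (and even bounded, since $\tilde\pi(s,t)$ involves the bounded quantities $\nbE(\exp\{\sum_j s_jx_je^{-\delta(L_j-t)}\mathbbm{1}_{[L_j>t]}\})$ with finite state space $\mathcal S$), so the linear matrix ODE \eqref{Poisson_ODE} has a unique solution with prescribed initial condition by standard Cauchy--Lipschitz theory. The only step requiring a little care is the differentiation under the integral sign, handled by the continuity argument above; all the rest is routine.
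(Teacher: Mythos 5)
Your proof is correct and follows essentially the same route as the paper's (very terse) argument: substitute the exponential $\bar F$ and $dF$ into the renewal equation \eqref{ren_eq_psi_tilda} and differentiate in $t$. You simply spell out the change of variable, the identity $\tilde Q(s,t)'=P\tilde\pi(s,t)$, the initial condition, and the Cauchy--Lipschitz uniqueness step that the paper leaves implicit.
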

Note that, even though \eqref{Poisson_ODE} admits a unique solution, $\tilde{\psi}(s,t)$ is not explicit except for particular cases e.g. when $P=I$. That is, when there is no  Markov interference in the model, the differential equation \eqref{Poisson_ODE} can be solved componentwise. Also note that a similar differential equation was obtained when $\alpha=0$ in \cite[Theorem 3.1]{MT02} for the joint generating function, when interarrival times are matrix exponentially distributed.
\begin{proof}
Setting $dF(y)=\lambda e^{-\lambda y}dy$ and $\bar{F}(t)=e^{-\lambda t}$ in \eqref{ren_eq_psi_tilda} and differentiating with respect to $t$ yields \eqref{Poisson_ODE}.
\end{proof}

\subsection{The first and second moments}\label{sec:first_second_workload}
We are now interested in the first two moments defined in \eqref{def_moments}.  It is standard that $M_j(t)$ and $M_{jj'}(t)$ are linked to  $\tilde{\psi}(s,t)$ by
$$
M_j(t)=\left.\partial_{s_j}\tilde{\psi}(s,t)\right|_{s={\bf 0}},\quad M_{jj'}(t)=\left.\partial_{s_j}\partial_{s_{j'}}\tilde{\psi}(s,t)\right|_{s={\bf 0}},\quad j,j'=1,...,k .
$$
 It requires to differentiate $\tilde{\pi}(s,r)$ in (\ref{def_pi_Q_tilda}) with respect to $s_j$ or $s_j$ and $s_{j'}$ followed by putting $s={\bf 0}$. One obtains
\begin{eqnarray}
\left.\partial_{s_j}\tilde{\pi}(s,r)\right|_{s={\bf 0}}&=&\nbE \left( e^{-\alpha (L_j-r)}\nbu_{[L_j>r]}\right)\Delta_j,\label{derive1_pi_tilde}\\
\left.\partial_{s_{j'}}\partial_{s_j}\tilde{\pi}(s,r)\right|_{s={\bf 0}}&=& \nbE \left( e^{-\alpha (L_j-r)}\nbu_{[L_j>r]}e^{-\alpha (L_{j'}-r)}\nbu_{[L_{j'}>r]}\right)\Delta_j \Delta_{j'},\label{derive2_pi_tilde}
\end{eqnarray}
where $\Delta_j$ is given by (\ref{Di}).
Moreover, one also needs to compute $\tilde{\psi}({\bf 0},r)$, namely from \eqref{eq_psi_tilda}
\begin{equation}
\tilde{\psi}({\bf 0},r)=\nbE\bigg(  \prod_{i=1}^{N_r} \tilde{Q}({\bf 0},r-T_i)'\bigg)= \nbE\left( P^{N_r} \right),\quad r\ge 0 ,\label{psi_tilda_zero}
\end{equation}
as indeed $ \tilde{Q}({\bf 0},r)=P'$, again with the convention $P^0=I$. It is convenient to introduce the following notations for all $j,j'=1,...,k$:
\begin{eqnarray}
b_j(t)&:=&\int_0^t \Big[\left.\partial_{s_j}\tilde{\pi}(s,t-y)\right|_{s={\bf 0}}\Big] P \tilde{\psi}({\bf 0},t-y)dF(y),\label{bi1}\\
b_{jj'}(t)&:=& \int_0^t \Big[\left.\partial_{s_{j'}}\partial_{s_j}\tilde{\pi}(s,t-y)\right|_{s={\bf 0}}\Big] P \tilde{\psi}({\bf 0},t-y)dF(y)\nonumber\\
&&+ \int_0^t \Big[\left.\partial_{s_j}\tilde{\pi}(s,t-y)\right|_{s={\bf 0}}\Big]P M_{j'}(t-y)dF(y)+ \int_0^t \Big[\left.\partial_{s_{j'}}\tilde{\pi}(s,t-y)\right|_{s={\bf 0}}\Big]P M_{j}(t-y)dF(y),\nonumber
\\
\label{bi2}
\end{eqnarray}
where $\partial_{s_j}\tilde{\pi}(s,t-y)|_{s={\bf 0}}$, $\partial_{s_{j'}}\partial_{s_j}\tilde{\pi}(s,t-y)|_{s={\bf 0}}$, $ \partial_{s_{j'}}\partial_{s_j}\tilde{\pi}(s,t-y)|_{s={\bf 0}} $ and $\tilde{\psi}({\bf 0},t-y)$ are given by \eqref{derive1_pi_tilde}, \eqref{derive2_pi_tilde} and \eqref{psi_tilda_zero}. Note that $b_j(t)$ is not explicit since \eqref{psi_tilda_zero} does not in general have a closed-form expression, and  so is $b_{jj'}(t)$. 
We may then obtain general results concerning $M_j(t)$ and $M_{jj'}(t)$. Following the notation in \cite[Section 2]{AM75}, we define, for a $N\times N$ dimensional matrix of non decreasing right continuous functions $(t\mapsto F_{ij}(t))_{i,j=1,...,N}$ and a $N\times N$ dimensional matrix of bounded measurable functions $(t\mapsto H_{ij}(t))_{i,j=1,...,N}$, the convolution $t\mapsto F\star H(t)= (F\star H)_{i,j=1,...,N}(t)$ by
$$
(F\star H)_{i,j}(t):= \sum_{h=1}^N \int_0^t H_{hj}(t-u)dF_{ih}(u)\quad i,j=1,...,N,\ t\ge 0.
$$
Then, differentiating \eqref{ren_eq_psi_tilda} with respect to $s_j$ or $s_j$ and $s_{j'}$ followed by putting $s={\bf 0}$ yields the following proposition. 
\begin{prop}\label{prop_renewal_equations_moments}
For $j,j'=1,...,k$, $M_j(t)$ and $M_{jj'}(t)$ satisfy the following multidimensional renewal equations
\begin{eqnarray}
M_j(t)&=& b_j(t) + (PF)\star M_j(t),\quad t\ge 0,\label{renewal_Mi}\\
M_{jj'}(t)&=& b_{jj'}(t) + (PF)\star M_{jj'}(t),\quad t\ge 0,\label{renewal_Mi2}
\end{eqnarray}
where $(PF)(t):=(p(x,y)F(t))_{(x,y)\in {\cal S}^2}$, and $b_j(t)$ and $b_{jj'}(t)$ are given by \eqref{bi1} and \eqref{bi2}.
\end{prop}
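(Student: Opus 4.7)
My plan is to differentiate the integral equation \eqref{ren_eq_psi_tilda} in $s$ and evaluate at $s = {\bf 0}$, using the identities $M_i(t) = \partial_{s_i}\tilde{\psi}(s,t)|_{s={\bf 0}}$ and $M_{ii'}(t) = \partial_{s_i}\partial_{s_{i'}}\tilde{\psi}(s,t)|_{s={\bf 0}}$ recalled just before the proposition. The boundary term $\bar{F}(t) I$ in \eqref{ren_eq_psi_tilda} is independent of $s$ and hence vanishes under differentiation, so the renewal structure will arise entirely from the convolution integral.

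For \eqref{renewal_Mi}, I would apply the product rule on the integrand $\tilde{Q}(s,t-y)'\tilde{\psi}(s,t-y)$, writing $\tilde{Q}(s,t-y)' = P\tilde{\pi}(s,t-y)$ via \eqref{def_pi_Q_tilda1} and the diagonality (hence symmetry) of $\tilde{\pi}$. Two terms arise. The one in which the derivative falls on $\tilde{\pi}$ uses $\tilde{\pi}({\bf 0},t-y) = I$ together with $\tilde{\psi}({\bf 0},t-y) = \nbE(P^{N_{t-y}})$ from \eqref{psi_tilda_zero}, and, combined with \eqref{derive1_pi_tilde}, reproduces $b_i(t)$ as defined in \eqref{bi1}. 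The term in which the derivative falls on $\tilde{\psi}$ uses $\tilde{\pi}({\bf 0},t-y) = I$ and, after unfolding the entrywise definition of the matrix convolution given just above the proposition (and noting that $d(PF)_{xh}(y) = p(x,h)\,dF(y)$), assembles into $(PF)\star M_i(t)$.

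For \eqref{renewal_Mi2}, applying $\partial_{s_i}\partial_{s_{i'}}$ to $\tilde{\pi}(s,t-y)\tilde{\psi}(s,t-y)$ and evaluating at ${\bf 0}$ produces four terms. The one where both derivatives hit $\tilde{\psi}$ again gives $(PF) \star M_{ii'}(t)$, using $\tilde{\pi}({\bf 0},\cdot) = I$. The remaining three terms correspond to the pure second derivative $\partial_{s_i}\partial_{s_{i'}}\tilde{\pi}$ paired with $\tilde{\psi}({\bf 0},\cdot)$, and the two mixed cross terms $\partial_{s_i}\tilde{\pi}$ paired with $\partial_{s_{i'}}\tilde{\psi} = M_{i'}$ and the symmetric counterpart; identifying these via \eqref{derive2_pi_tilde} with the three summands of \eqref{bi2} yields exactly $b_{ii'}(t)$.

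I do not anticipate a genuine obstacle: the proof is essentially a bookkeeping exercise in differentiation under the integral sign. The main points requiring care are the non-commutativity of $P$ with the diagonal matrices $\Delta_i$, so matrix factors must be kept in the correct left-to-right order throughout, and the verification that premultiplication by $P$ under the $dF$-integral coincides with the matrix convolution notation $(PF)\star\cdot$ from the paragraph preceding the proposition.
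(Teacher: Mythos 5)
Your plan — differentiate the integral equation \eqref{ren_eq_psi_tilda} once or twice with respect to the $s$-variables and set $s={\bf 0}$ — is exactly the paper's intended route; the sentence preceding the proposition states that the result follows from this differentiation, so there is no difference of method. There is, however, a concrete gap in the step where you assert that the $\tilde{\pi}$-derivative term ``reproduces $b_i(t)$ as defined in \eqref{bi1}''. You correctly write $\tilde{Q}(s,t-y)'=P\,\tilde{\pi}(s,t-y)$ (transpose of a product, with $\tilde{\pi}$ diagonal hence symmetric). Applying the product rule to $\tilde{Q}(s,t-y)'\tilde{\psi}(s,t-y)$ and evaluating at $s={\bf 0}$ therefore gives, for the term where $\partial_{s_i}$ falls on $\tilde{\pi}$,
\[
P\Big[\partial_{s_i}\tilde{\pi}(s,t-y)\Big|_{s={\bf 0}}\Big]\tilde{\psi}({\bf 0},t-y)
=\nbE\!\left(e^{-\delta(L_i-(t-y))}\nbu_{[L_i>t-y]}\right)\,P\,\Delta_i\,\tilde{\psi}({\bf 0},t-y),
\]
with the factor $P$ to the \emph{left} of $\Delta_i$, whereas \eqref{bi1} as printed places $P$ to the \emph{right} of $\partial_{s_i}\tilde{\pi}|_{s={\bf 0}}=\nbE(\cdot)\Delta_i$, i.e.\ has $\Delta_i P$. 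Since $\Delta_i$ and $P$ do not commute, these are not the same matrix, so the differentiation does not literally reproduce \eqref{bi1}; the same ordering mismatch appears in each of the three summands of \eqref{bi2}. You flagged non-commutativity of $P$ and $\Delta_i$ as the main point requiring care, but then did not carry the bookkeeping through: a correct write-up must either record that the forcing term obtained is $\int_0^t P[\partial_{s_i}\tilde{\pi}(s,t-y)|_{s={\bf 0}}]\tilde{\psi}({\bf 0},t-y)\,dF(y)$ and flag the discrepancy with the printed \eqref{bi1}, or explain why it is immaterial. (A direct check, e.g.\ computing the contribution to $M_i(t)$ from $N_t=2$, which is $a_t(T_1)P\Delta_i P + a_t(T_2)P^2\Delta_i$ with $a_t(u)=\nbE(e^{-\delta(L_i-(t-u))}\nbu_{[L_i>t-u]})$, confirms that $P\Delta_i$ and not $\Delta_i P$ is what the renewal equation requires.) The convolution term $(PF)\star M_i$ is identified correctly in your argument, so the renewal-equation \emph{form} of the proposition is fine; the gap is solely in the unverified claim that your computation matches \eqref{bi1} verbatim.
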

Although the solution for \eqref{ren_eq_psi_tilda} does not have a closed-form expression, it turns out that \eqref{bi1} and \eqref{bi2} have solutions which can be expressed in terms of a multidimensional renewal function. Indeed, since interarrival times satisfy $\tau>0$ a.s., one has that $(PF)(0)$ is the zero matrix, of which largest eigenvalue is thus $0$. \cite[Lemma 2.1]{AM75} entails that
\begin{equation}\label{sol_renewal}
M_j(t)=U\star b_j (t),\quad M_{jj'}(t)=U\star b_{jj'} (t),\quad  t\ge 0,
\end{equation}
where $U(t)$ is the renewal function defined by $U(t):=\sum_{n=0}^\infty (PF)^{\star (n)}(t)$, an ${\cal S}\times {\cal S}$ matrix, see \cite[Definition (2.3)]{AM75}. At this point, solutions in  \eqref{sol_renewal} are still not satisfactory because $U(t)$, $b_j (t)$, and $b_{jj'} (t)$ are not explicit. Hence, limiting behaviours of $M_j(t)$ and $M_{jj'}(t)$ are studied instead given as below.
\begin{lemm}\label{lemma_asymptotic_moment_general}
Let us suppose that $\tau$ is non lattice, then one has the following
\begin{eqnarray}
M_j(t) &\longrightarrow &\frac{1}{\nbE(\tau)}{\bf 1}\pi \int_0^\infty b_j(t) dt  ,\quad t\to\infty, \ j=1,...,k,\label{limit_moment1}\\
M_{jj'}(t) &\longrightarrow &\frac{1}{\nbE(\tau)}{\bf 1}\pi \int_0^\infty b_{jj'}(t) dt  ,\quad t\to\infty, \ j,j'=1,...,k.\label{limit_moment2}
\end{eqnarray}
\end{lemm}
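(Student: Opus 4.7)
The plan is to apply a matrix-valued renewal theorem directly to the convolution representations \eqref{sol_renewal}, namely $M_i(t)=U\star b_i(t)$ and $M_{ii'}(t)=U\star b_{ii'}(t)$. Since the Markov renewal kernel is $PF$ with $P$ irreducible (by ergodicity of the chain) and stationary row vector $\pi$, and since $F$ is non-lattice with finite mean $\nbE(\tau)$, the classical matrix renewal theorem (in the spirit of \cite{AM75}) yields, for any directly Riemann integrable matrix-valued function $h$,
\[
U \star h(t) \longrightarrow \frac{1}{\nbE(\tau)}\, {\bf 1}\, \pi \int_0^\infty h(u)\, du,\quad t\to\infty .
\]
Substituting $h=b_i$ then gives \eqref{limit_moment1}, and $h=b_{ii'}$ gives \eqref{limit_moment2}, \emph{provided} that direct Riemann integrability of $b_i$ and $b_{ii'}$ is established. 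Everything else (irreducibility, non-latticeness, finite mean of $\tau$) is built into the hypotheses and notation of the lemma.

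The bulk of the work is therefore to verify this integrability. For $b_i$ defined in \eqref{bi1}, I would use \eqref{derive1_pi_tilde} and \eqref{psi_tilda_zero} to bound the integrand in modulus by a uniformly bounded matrix (entries of $\tilde{\psi}({\bf 0},r)=\nbE(P^{N_r})$ lie in $[0,1]$, and $\Delta_i$ is fixed) times the scalar $g_i(r):=\nbE\!\left(e^{-\delta(L_i-r)}\nbu_{[L_i>r]}\right)$ evaluated at $r=t-y$. Because $g_i(r)\downarrow 0$ as $r\to\infty$ and, by Fubini, $\int_0^\infty g_i(r)\,dr=\nbE\!\left[(1-e^{-\delta L_i})/\delta\right]$ is finite (interpreted as $\nbE(L_i)$ when $\delta=0$, assumed integrable), a second application of Fubini to \eqref{bi1} gives $\int_0^\infty b_i(t)\,dt<\infty$. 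Combined with the continuity and boundedness of the integrand in $t$, this suffices for direct Riemann integrability of $b_i$.

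The argument for $b_{ii'}$ in \eqref{bi2} is analogous for the first summand, but the remaining two summands involve $M_i(t-y)$ and $M_{i'}(t-y)$, which is where the main obstacle lies: one must first establish \eqref{limit_moment1} so that $M_i(t)$ is known to converge, and hence to be uniformly bounded in $t$ by some constant. Only then can the same Fubini-and-dominated argument be applied termwise in \eqref{bi2} to obtain $\int_0^\infty b_{ii'}(t)\,dt<\infty$ and direct Riemann integrability of $b_{ii'}$, from which \eqref{limit_moment2} follows. Thus the two limits are proved sequentially rather than simultaneously, and the technical care concerning $b_{ii'}$---in particular controlling the $M_i(t-y)$ terms uniformly---constitutes the only nontrivial step beyond citing the matrix renewal theorem.
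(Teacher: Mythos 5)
Your proof follows essentially the same route as the paper: both rest on the matrix key renewal theorem of Athreya and Ramamurthy \cite{AM75} (the paper invokes their Theorem 2.2(iii) directly on the renewal equations \eqref{renewal_Mi}--\eqref{renewal_Mi2}, while you pass through the equivalent solution form $M_i=U\star b_i$ from \eqref{sol_renewal}), and you correctly identify that the substantive hypothesis to check is direct Riemann integrability of $b_i$ and $b_{ii'}$, together with the sequential dependence for $b_{ii'}$ (one first needs $M_i$ to converge and hence be bounded) --- points the paper's one-line proof glides over. One small imprecision to flag: continuity plus boundedness plus $L^1$-integrability do \emph{not} in general imply direct Riemann integrability; the correct observation for $b_i$ is that its entries are dominated by a constant multiple of $t\mapsto\int_0^t g_i(t-y)\,dF(y)$ with $g_i$ non-increasing and integrable, and a convolution of a distribution function with a directly Riemann integrable function is itself directly Riemann integrable --- this is the standard sufficient criterion that closes the argument.
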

\begin{proof}
Since $(PF)(\infty)=P$ has a spectral radius equal to $1$ and has (row vector) $\pi$ and ${\bf 1}$ (column vector) as left and right eigenvectors associated to the eigenvalue $1$, the renewal equation \eqref{renewal_Mi} satisfied by $M_j(t)$ and \cite[Theorem 2.2 (iii)]{AM75} yield \eqref{limit_moment1}. The same method applied to the renewal equation \eqref{renewal_Mi2} satisfied by $M_{jj'}(t)$ yields \eqref{limit_moment2}.
\end{proof}
Note that the limit in \eqref{limit_moment1} is not clearly available because an explicit expression for $\nbE(P^{N_r})$ is required to integrate $b_j(t)$ (see \eqref{bi1} with $\tilde{\psi}({\bf 0},t-y)$ given by \eqref{psi_tilda_zero}). Likewise, the limit in \eqref{limit_moment2} is not explicit either as it requires analytic expressions for $ M_j(t)$ and $M_{j'}(t)$ in \eqref{bi2}. However, the limiting first moment is explicitly available as below:
\begin{prop}\label{prop_asymptotic_moment_general_vector}
Assuming that $\tau$ is non lattice, the expectation $M_j(t){\bf 1}=[\nbE (\tilde{Z}_j(t)| \ X_1=x)]_{x\in {\cal S}} '$ asymptotically behaves as
\begin{equation}\label{limit_moment_simple}
M_j(t){\bf 1}\longrightarrow \frac{ \nbE(X_j)}{\nbE(\tau)}
\bigg[\frac{1-{\cal L}_j(\alpha)}{\alpha}\bigg]{\bf 1},\quad t\to\infty, \   j=1,...,k.
\end{equation}
\end{prop}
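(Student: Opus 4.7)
The plan is to apply Lemma \ref{lemma_asymptotic_moment_general} and then simplify the right-hand side of \eqref{limit_moment1} by multiplying on the right by $\mathbf{1}$. The key observation is that although $b_i(t)$ in \eqref{bi1} contains the non-explicit factor $\tilde{\psi}(\mathbf{0},t-y)=\mathbb{E}(P^{N_{t-y}})$, this factor collapses when one applies it to $\mathbf{1}$, because $P\mathbf{1}=\mathbf{1}$ yields $P^{n}\mathbf{1}=\mathbf{1}$ for every $n$, hence $\tilde{\psi}(\mathbf{0},r)\mathbf{1}=\mathbf{1}$ for every $r\ge 0$ from \eqref{psi_tilda_zero}.

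With this in hand, I would carry out the following steps. First, by Lemma \ref{lemma_asymptotic_moment_general},
\[
M_i(t)\mathbf{1}\longrightarrow \frac{1}{\mathbb{E}(\tau)}\mathbf{1}\,\pi\,\Bigl(\int_0^\infty b_i(t)\,dt\Bigr)\mathbf{1},\qquad t\to\infty,
\]
so it suffices to compute the scalar $\pi\bigl(\int_0^\infty b_i(t)\,dt\bigr)\mathbf{1}$. Using \eqref{bi1} and \eqref{derive1_pi_tilde}, together with $P\tilde{\psi}(\mathbf{0},t-y)\mathbf{1}=\mathbf{1}$, I get
\[
b_i(t)\mathbf{1}=\int_0^t \mathbb{E}\bigl(e^{-\delta(L_i-(t-y))}\mathbf{1}_{[L_i>t-y]}\bigr)\,\Delta_i\mathbf{1}\,dF(y).
\]
Next, $\Delta_i\mathbf{1}$ is the column vector with entries $x_i$, so $\pi\Delta_i\mathbf{1}=\sum_{x\in\mathcal{S}}\pi(x)x_i=\mathbb{E}(X_i)$ (the latter being the expectation under the stationary law of the Markov chain). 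Therefore, setting $g(u):=\mathbb{E}\bigl(e^{-\delta(L_i-u)}\mathbf{1}_{[L_i>u]}\bigr)$,
\[
\pi\,b_i(t)\mathbf{1}=\mathbb{E}(X_i)\int_0^t g(t-y)\,dF(y).
\]

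The remaining step is to integrate in $t$ and to evaluate the resulting scalar integrals. By Fubini,
\[
\int_0^\infty \pi\,b_i(t)\mathbf{1}\,dt=\mathbb{E}(X_i)\int_0^\infty dF(y)\int_0^\infty g(u)\,du=\mathbb{E}(X_i)\int_0^\infty g(u)\,du,
\]
and a second application of Fubini gives
\[
\int_0^\infty g(u)\,du=\mathbb{E}\!\left(\int_0^{L_i}e^{-\delta(L_i-u)}\,du\right)=\mathbb{E}\!\left(\frac{1-e^{-\delta L_i}}{\delta}\right)=\frac{1-\mathcal{L}^{L_i}(\delta)}{\delta}.
\]
Combining the three displays gives exactly \eqref{limit_moment_simple}.

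There is no substantive obstacle here; the only point requiring care is the order in which the algebraic simplifications are applied. Specifically, one must hit $M_i(t)$ with $\mathbf{1}$ on the right \emph{before} trying to integrate in $t$: this is what removes the unknown $\mathbb{E}(P^{N_{t-y}})$ via $P\mathbf{1}=\mathbf{1}$ and makes the two Fubini exchanges legitimate (the positivity of the integrands justifies the interchanges). The overall output is a scalar multiple of $\mathbf{1}$, independent of the starting state $x$, consistent with the statement.
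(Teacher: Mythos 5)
Your proposal is correct and follows essentially the same route as the paper: post-multiply \eqref{limit_moment1} by ${\bf 1}$, use $P{\bf 1}={\bf 1}$ to collapse $\tilde{\psi}({\bf 0},r){\bf 1}={\bf 1}$ in \eqref{bi1}, apply Fubini twice to evaluate $\int_0^\infty b_i(t){\bf 1}\,dt$, and finish with $\pi\Delta_i{\bf 1}=\nbE(X_i)$. The only (inconsequential) difference is that you hit the expression with $\pi$ before integrating in $t$, whereas the paper integrates first and multiplies by ${\bf 1}\pi$ at the end.
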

\begin{proof}
Let us prove that one obtains \eqref{limit_moment_simple} by post multiplying \eqref{limit_moment1} by ${\bf 1}$.  First note that, as $P {\bf 1}={\bf 1}$, $b_j(t){\bf 1}$ reduces thanks to \eqref{bi1}, \eqref{derive1_pi_tilde} and \eqref{psi_tilda_zero} to
\begin{eqnarray}
b_j(t){\bf 1}&=& \int_0^t \nbE \left( e^{-\alpha (L_j-(t-y))}\nbu_{[L_j>t-y]}\right)\Delta_j .P \nbE\left( P^{N_{t-y}}\right){\bf 1} dF(y)\nonumber\\
&=& \int_0^t \nbE \left( e^{-\alpha (L_j-(t-y))}\nbu_{[L_j>t-y]}\right)dF(y) .\ \Delta_j {\bf 1}\nonumber\\
&=& \nbE \left( e^{-\alpha (L_j-(t-\tau))}\nbu_{[0 \le t-\tau <  L_j]}\right)\Delta_j {\bf 1}.\label{proof_limit_Mi2}
\end{eqnarray}
In order to compute $\int_0^\infty b_j(t){\bf 1} dt$, one calculates
\begin{multline*}
\int_0^\infty \nbE \left( e^{-\alpha (L_j-(t-\tau))}\nbu_{[L_j>t-\tau\ge 0]}\right) dt=\int_0^\infty \nbE \left( e^{-\alpha (L_j-(t-\tau))}\nbu_{[\tau \le t < L_j+\tau]}\right) dt\\
=\nbE \left( \int_\tau^{L_j+\tau} e^{-\alpha (L_j-(t-\tau))} dt\right)=\nbE \left( \frac{1}{\alpha}\left[ 1- e^{-\alpha L_j}\right]\right)=\frac{1-{\cal L}_j(\alpha)}{\alpha},
\end{multline*}
so that one obtains from \eqref{proof_limit_Mi2} and \eqref{limit_moment1} that, as $t\to\infty$,
$$
M_j(t){\bf 1} \longrightarrow \frac{1}{\nbE(\tau)}{\bf 1}\pi \int_0^\infty b_j(t){\bf 1} dt = \frac{1}{\nbE(\tau)} \bigg[\frac{1-{\cal L}_j(\alpha)}{\alpha}\bigg] {\bf 1}\pi \ \Delta_j {\bf 1}.
$$
One checks easily that $ \pi \ \Delta_j\ {\bf 1}=\nbE(X_j)$  where $\Delta_j$ is given in \eqref{Di}, yielding \eqref{limit_moment_simple}.
\end{proof}

\subsection{The workload}\label{sec:workload}
In this section, in the context of infinite server queues, following \cite[Section 5.2]{RW17}, we then consider the workload of the queues as the vector of $k$ processes $D(t)=(D_1(t),...,D_k(t))$ with each process defined as
\begin{equation*}
D_j(t):= \sum_{i=1}^{N_t}X_{ij}(T_i+L_{ij}-t)\nbu_{[t<T_i+L_{ij}]},\quad t\ge 0,\qquad j=1,...,k.
\end{equation*}
and that one has for all $t\ge 0$ and $j=1,...,k$
\[
D_j(t)=\left. -\frac{\partial}{\partial \alpha} \tilde{Z}_j(t;\alpha)\right|_{\alpha =0}.
\]
Here we define the joint expectation of the workload and the state of $X_{N_t}$ given the initial state of $X_0$ as
\begin{eqnarray}\label{Wit}
W_j(t)&:=& \left[\nbE \left(\left.D_j(t)\nbu_{[X_{N_t}=y]}\right|X_0=x\right)\right]_{(x,y)\in {\cal S}^2}\\
&=&\left. -\frac{\partial}{\partial \alpha} M_j(t)\right|_{\alpha =0}=\left[ -\frac{\partial}{\partial \alpha} \left[ \partial_{s_j}\tilde{\psi}(s,t;\alpha)\right]_{s={\bf 0}}\right]_{\alpha =0}.\nonumber
\end{eqnarray}
The following results the analogs of Proposition \ref{prop_renewal_equations_moments}, Lemma \ref{lemma_asymptotic_moment_general} and Proposition \ref{prop_asymptotic_moment_general_vector}.  First, let us define and compute 
\begin{equation}\label{di}
 \ell_j(t):=  \int_0^t \left[ -\frac{\partial}{\partial \alpha}\left[\partial_{s_j}\tilde{\pi}(s,t-y)\right]_{s={\bf 0}} \right]_{\alpha =0}P \tilde{\psi}({\bf 0},t-y)dF(y),
\end{equation}
where it follows from (\ref{derive1_pi_tilde}) that
\begin{equation}\label{di_extra}
\left[ -\frac{\partial}{\partial \alpha}\left[\partial_{s_j}\tilde{\pi}(s,r)\right]_{s={\bf 0}} \right]_{\alpha =0}= \nbE\left((L_j-r)\nbu_{[L_j>r]}\right)\ \Delta_j.
\end{equation} 
Consequently, the following proposition is provided.
\begin{prop}\label{prop_workload}
The joint expectation of the workload and the state of $X_{N_t}$ satisfies
\begin{equation}
W_j(t) = \ell_j(t) + (PF)\star W_j(t),\quad t\ge 0, \quad j=1,...,k,
\label{renewal_Di}
\end{equation}
of which asymptotic expression is given by
\begin{equation}
W_j(t) \longrightarrow \frac{1}{\nbE(\tau)}{\bf 1}\pi \int_0^\infty \ell_j(t) dt  ,\quad t\to\infty, \quad \ j=1,...,k.\label{limit_workload}
\end{equation}
Moreover, the asymptotic expected workload $W_j(t){\bf 1}=\left[\nbE \left(\left.D_j(t)\right|X_0=x\right)\right]_{x\in {\cal S}} '$ is given by
\begin{equation}\label{limit_workload_simple}
W_j(t){\bf 1}\longrightarrow \left[ \frac{\nbE(L_j^2)}{2\nbE(\tau)} +\frac{\nbE(\tau^2)}{2\nbE(\tau)}+\nbE(L_j)\right] \ \nbE(X_j){\bf 1},\quad t\to\infty, \ j=1,...,k. 
\end{equation}
\end{prop}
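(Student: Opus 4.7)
The strategy is to obtain the three claims by pushing the identity $W_i(t) = -\partial_\delta M_i(t)|_{\delta=0}$ through the statements of Proposition \ref{prop_renewal_equations_moments}, Lemma \ref{lemma_asymptotic_moment_general}, and Proposition \ref{prop_asymptotic_moment_general_vector}. Since the kernel $(PF)(t)$ carries no $\delta$ dependence, applying $-\partial_\delta|_{\delta=0}$ to \eqref{renewal_Mi} preserves the convolution structure and yields $W_i(t) = -\partial_\delta b_i(t)|_{\delta=0} + (PF)\star W_i(t)$. To identify the source term with $\ell_i(t)$, I would observe from \eqref{def_pi_Q_tilda} that $\tilde{\pi}({\bf 0},r) = I$ is $\delta$-free, hence by \eqref{psi_tilda_zero} so is $\tilde{\psi}({\bf 0},r) = \nbE(P^{N_r})$; thus the only $\delta$-sensitive factor in the integrand of $b_i(t)$ is $[\partial_{s_i}\tilde{\pi}(s,t-y)|_{s={\bf 0}}]$, and evaluating its $\delta$-derivative via \eqref{di_extra} produces exactly \eqref{di}, establishing \eqref{renewal_Di}. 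Dominated convergence under finite moment assumptions on $L_i$ and $\tau$ justifies the interchange of derivative and integral.

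For \eqref{limit_workload}, I would invoke \cite[Theorem 2.2(iii)]{AM75} applied to the multidimensional renewal equation \eqref{renewal_Di}, mirroring the proof of \eqref{limit_moment1}: the matrix $(PF)(\infty) = P$ has spectral radius $1$ with left (row) eigenvector $\pi$ and right (column) eigenvector ${\bf 1}$, and since $\tau$ is non-lattice, the key renewal theorem gives the stated asymptotic formula, provided $\ell_i$ is directly Riemann integrable --- a property that follows from the assumed moments on $L_i$.

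For the scalar limit \eqref{limit_workload_simple}, I would post-multiply \eqref{limit_workload} by ${\bf 1}$ and exploit $P{\bf 1}={\bf 1}$ together with $\tilde{\psi}({\bf 0},r){\bf 1} = \nbE(P^{N_r}){\bf 1} = {\bf 1}$ to collapse the matrix structure of $\ell_i(t){\bf 1}$ down to a scalar times $\Delta_i{\bf 1}$, then evaluate $\int_0^\infty \ell_i(t){\bf 1}\,dt$ via Fubini, and finish with $\pi\Delta_i{\bf 1} = \nbE(X_i)$. I expect the main obstacle to lie precisely in this last integral: rewriting the bracketed constant as $\nbE((L_i + \tau)^2)/(2\nbE(\tau))$ suggests that, unlike the proof of \eqref{limit_moment_simple} where the single substitution $u = t-\tau$ gave $\nbE(L_i^2)/2$, one must here carefully track the joint distribution of $(L_i,\tau)$ and fold in an equilibrium renewal-type contribution to recover the additional terms $\nbE(\tau^2)/(2\nbE(\tau)) + \nbE(L_i)$. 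This careful bookkeeping of the residual interarrival cycle alongside the residual service content is the delicate step of the argument.
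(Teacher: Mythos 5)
Your treatment of \eqref{renewal_Di} and \eqref{limit_workload} coincides with the paper's, which disposes of both by declaring them ``analogous'' to \eqref{renewal_Mi} and \eqref{limit_moment1}; your explicit check that only $[\partial_{s_i}\tilde{\pi}]_{s={\bf 0}}$ carries $\delta$-dependence (while $\tilde{\psi}({\bf 0},\cdot)$ and $(PF)$ are $\delta$-free) is exactly the right justification for why differentiation in $\delta$ preserves the renewal structure and turns the source term into $\ell_i(t)$.

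For \eqref{limit_workload_simple} there is a genuine issue, but it is not the one you braced for. Your naive computation is correct and should simply be carried through: post-multiplying \eqref{di} by ${\bf 1}$ and using $P\tilde{\psi}({\bf 0},\cdot){\bf 1}={\bf 1}$ gives $\ell_i(t){\bf 1}=\nbE\bigl((L_i+\tau-t)\nbu_{[\tau\le t< L_i+\tau]}\bigr)\Delta_i{\bf 1}$, where the constraint $\tau\le t$ is forced by the integration range $[0,t]$ in \eqref{di}, exactly as it appears in \eqref{proof_limit_Mi2}. Fubini and the substitution $u=t-\tau$ then yield $\int_0^\infty\ell_i(t){\bf 1}\,dt=\nbE\bigl(\int_\tau^{L_i+\tau}(L_i+\tau-t)\,dt\bigr)\Delta_i{\bf 1}=\nbE(L_i^2/2)\,\Delta_i{\bf 1}$, hence $W_i(t){\bf 1}\to\frac{\nbE(L_i^2)}{2\nbE(\tau)}\nbE(X_i){\bf 1}$, consistent with the Campbell/shot-noise value of the stationary residual-service workload. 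The paper's proof, by contrast, records $\ell_i(t){\bf 1}=\nbE\bigl((L_i+\tau-t)\nbu_{[L_i+\tau>t]}\bigr)\Delta_i{\bf 1}$ with the indicator $\nbu_{[\tau\le t]}$ dropped; that omission is precisely what produces $\nbE((L_i+\tau)^2/2)$ and, upon expansion by independence of $L_i$ and $\tau$, the two extra terms $\nbE(\tau^2)/(2\nbE(\tau))+\nbE(L_i)$ in \eqref{limit_workload_simple}. There is no ``equilibrium renewal-type contribution'' to fold in: the straightforward substitution gives the answer, and the ``delicate step'' you anticipated does not exist. (A small side correction: in the proof of \eqref{limit_moment_simple} that same substitution yields $(1-{\cal L}^{L_i}(\delta))/\delta$, not $\nbE(L_i^2)/2$; your sentence appears to conflate it with the $\delta$-differentiated workload integral.)
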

\begin{proof}
Since the proof of \eqref{renewal_Di} and \eqref{limit_workload} is analogous to the one of \eqref{renewal_Mi} and \eqref{limit_moment1}, our focus is on \eqref{limit_workload_simple}. As in the proof of Proposition \ref{prop_asymptotic_moment_general_vector} similar to \eqref{proof_limit_Mi2}, using $P{\bf 1}={\bf 1}$ one finds
$$
\ell_j(t){\bf 1}=\nbE\left( (L_j+\tau-t)\nbu_{[L_j+\tau>t]}\right)\Delta_j {\bf 1},
$$
with $\int_0^\infty \nbE\left( (L_j+\tau-t)\nbu_{[L_j+\tau>t]}\right) dt=\nbE\left( \int_0^{L_j+\tau} (L_j+\tau-t) dt\right) =\nbE\left( (L_j+\tau)^2/2\right)$. Using independence of $L_j$ and $\tau$ in the last expectation as well as $ \pi  \Delta_j {\bf 1}=\nbE(X_j)$ yields \eqref{limit_workload_simple} by post multiplying \eqref{limit_workload} by ${\bf 1}$.
\end{proof}

\section{Special cases}\label{sec:particular_cases}

The results given in Proposition \ref{prop_psi_tilda}, Lemma \ref{lemma_asymptotic_moment_general} and Proposition \ref{prop_workload} hold under general assumptions on the service times ($L_j$) and interarrival times ($\tau$). We present here some particular cases where those results are more explicitly obtainable with a specific distributional assumption for $L_j$ or $\tau$. Namely, as is customary when studying infinite server queues, one expects reasonably to obtain more information when one of those two rvs are exponentially distributed, see \cite[Chapter 3]{T62}.

\subsection{Exponentially distributed service times}\label{subsec:expo_service_times}
First, it is assumed that service time $L_j$ for $j$-type customer is exponentially distributed with parameter $\mu_j>0$. To provide explicit expressions for the limits of $M_j(t)$, $M_{jj'(t)}$ and $W_j(t)$ as $t\to\infty$ in \eqref{limit_moment1}, \eqref{limit_moment2} and \eqref{limit_workload}, we define the Laplace transforms for $\tilde{\psi}(s,t)$, $M_j(t)$, and $b_j(t)$ by
\begin{eqnarray}
\hat{\psi}(s,h)&=&\int_0^\infty \tilde{\psi}(s,t) e^{-ht}dt,\quad s\in\nbR^k ,\nonumber\\
\hat{M}_j(h) &=& \int_0^\infty M_j(t) e^{-ht}dt,\nonumber\\
\hat{b}_j(h) &=& \int_0^\infty b_j(t) e^{-ht}dt,\label{hatbi}
\end{eqnarray}
respectively for all $h>0$. Next, some relations between the above quantities are first given.
\begin{prop}
The Laplace transforms verify for all $h>0$
\begin{eqnarray}
\hat{\psi}({\bf 0},h)&=&\frac{1- {\cal L}^\tau(h)}{h} \left( I-{\cal L}^\tau(h)P\right)^{-1},\label{expression_psi_hat}\\
\hat{M}_j(h) &=& \left( I-{\cal L}^\tau(h)P\right)^{-1} \hat{b}_j(h),\quad j=1,...,k.\label{expression_Mi_hat}
\end{eqnarray}
\end{prop}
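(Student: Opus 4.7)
The plan is to derive both identities by taking Laplace transforms of previously established renewal equations and then inverting the resulting linear matrix relations.

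For \eqref{expression_psi_hat}, I would specialize the integral equation \eqref{ren_eq_psi_tilda} to $s={\bf 0}$. Since $\tilde{Q}({\bf 0},r)=P'$, one has $\tilde{Q}({\bf 0},r)'=P$, and \eqref{ren_eq_psi_tilda} collapses to the scalar-kernel matrix equation
\[
\tilde{\psi}({\bf 0},t)=\bar{F}(t)I + P\int_0^t \tilde{\psi}({\bf 0},t-y)\,dF(y),\quad t\ge 0.
\]
Applying the Laplace transform in $t$ and using the standard identities $\int_0^\infty \bar{F}(t)e^{-ht}dt=(1-{\cal L}^\tau(h))/h$ and $\int_0^\infty e^{-ht}\!\int_0^t f(t-y)dF(y)\,dt={\cal L}^\tau(h)\hat f(h)$, I would obtain the linear system
\[
(I-{\cal L}^\tau(h)P)\hat{\psi}({\bf 0},h)=\tfrac{1-{\cal L}^\tau(h)}{h}I.
\]
Solving for $\hat{\psi}({\bf 0},h)$ yields \eqref{expression_psi_hat}.

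For \eqref{expression_Mi_hat}, I would start from the renewal equation \eqref{renewal_Mi}, namely $M_i(t)=b_i(t)+(PF)\star M_i(t)$. The first step is to unpack the matrix convolution: using the componentwise definition from Section \ref{sec:first_second_workload} together with $d(PF)_{xz}(u)=p(x,z)\,dF(u)$, one recognizes that $((PF)\star M_i)(t)=P\int_0^t M_i(t-u)\,dF(u)$. Taking Laplace transforms in $t$ then gives
\[
\hat M_i(h)=\hat b_i(h)+{\cal L}^\tau(h)\,P\,\hat M_i(h),
\]
so that $(I-{\cal L}^\tau(h)P)\hat M_i(h)=\hat b_i(h)$, which after inversion is precisely \eqref{expression_Mi_hat}.

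The only subtle point — the main obstacle in principle — is to justify that $I-{\cal L}^\tau(h)P$ is invertible for every $h>0$. Since $P$ is stochastic (spectral radius $1$) and $\tau>0$ almost surely, one has $|{\cal L}^\tau(h)|<1$ for all $h>0$; hence the spectral radius of ${\cal L}^\tau(h)P$ is strictly less than $1$ and the Neumann series $\sum_{n\ge 0}[{\cal L}^\tau(h)]^n P^n$ converges to $(I-{\cal L}^\tau(h)P)^{-1}$. This also provides an independent sanity check on \eqref{expression_psi_hat}: expanding the inverse gives $\hat{\psi}({\bf 0},h)=\sum_{n\ge 0}P^n\,[{\cal L}^\tau(h)]^n(1-{\cal L}^\tau(h))/h$, which matches term-by-term the direct computation $\hat\psi({\bf 0},h)=\int_0^\infty \nbE(P^{N_t})e^{-ht}dt$ obtained from \eqref{psi_tilda_zero} together with $\nbP(N_t=n)=F_n(t)-F_{n+1}(t)$ and $\int_0^\infty F_n(t)e^{-ht}dt=[{\cal L}^\tau(h)]^n/h$; a similar consistency check can be run for \eqref{expression_Mi_hat} by iterating the renewal equation and taking transforms term-by-term.
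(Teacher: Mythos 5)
Your proof is correct and follows essentially the same route as the paper: specialize the integral/renewal equations at $s=\mathbf{0}$, take Laplace transforms in $t$, and invert $I-\mathcal{L}^\tau(h)P$ via the spectral-radius argument ($\mathcal{L}^\tau(h)<1$ and $P$ stochastic). The closing Neumann-series consistency check is a pleasant addition but not a different method.
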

\begin{proof}
Recalling that $ \tilde{\pi}({\bf 0},r)=I$ from (\ref{def_pi_Q_tilda}), \eqref{ren_eq_psi_tilda} with $s={\bf 0}$ becomes the renewal equation
$$
 \tilde{\psi}({\bf 0},t)=\bar{F}(t) I + (PF)\star  \tilde{\psi}({\bf 0},.) (t),
$$
which, upon taking Laplace transforms on both sides, yields
$$
\hat{\psi}({\bf 0},h) = \frac{1- {\cal L}^\tau(h)}{h}I + P {\cal L}^\tau(h)\hat{\psi}({\bf 0},h).
$$
Then \eqref{expression_psi_hat} is obtained by noting that, since ${\cal L}^\tau(h)<1$ and $P$ is a stochastic matrix, the matrix $ {\cal L}^\tau(h)P$ has spectral radius less than $1$ hence $I-{\cal L}^\tau(h)P$ is invertible. Similarly, \eqref{expression_Mi_hat} is obtained by taking Laplace transforms in the renewal equation \eqref{renewal_Mi}.
\end{proof}
\begin{theorem}\label{prop_Mi_expo_service}
The asymptotic result for the first moment jointly to the state of $X_{N_t}$ in \eqref{limit_moment1} can be precisely expressed as
 \begin{equation}\label{expression_Mi_expo_service}
M_j(t) \longrightarrow \frac{1}{\nbE(\tau)} 
\bigg[\frac{1- {\cal L}^\tau(\mu_j)}{\mu_j+\alpha}\bigg]{\bf 1}\pi \Delta_j P  \left( I-{\cal L}^\tau(\mu_j)P\right)^{-1},\quad t\to\infty, \   j=1,...,k.
\end{equation}
\end{theorem}
\begin{proof}
When $L_j\sim {\cal E}(\mu_j)$, one computes 
\begin{equation}\label{T10a}
\nbE \left( e^{-\alpha (L_j-r)}\nbu_{[L_j>r]}\right)=
\int^\infty_r e^{-\alpha(t-r)}\mu_j e^{-\mu_j t}dt=\mu_j e^{\alpha r} \int^\infty_r e^{-(\mu_j+\alpha)t}dt
=\frac{\mu_j}{\mu_j+\alpha} e^{-\mu_j  r}, \quad r\ge 0,
\end{equation}
so that one has \eqref{bi1} from \eqref{derive1_pi_tilde} and \eqref{T10a} that
\begin{eqnarray}
b_j(t)&=&\frac{\mu_j}{\mu_j+\alpha}\ \Delta_j P \int_0^t e^{-\mu_j (t-y)}\tilde{\psi}({\bf 0},t-y)dF(y)\nonumber\\
&=& \frac{\mu_j}{\mu_j+\alpha}\ \Delta_j P \ \nbE\left[e^{-\mu_j (t-\tau)}\tilde{\psi}({\bf 0},t-\tau)\nbu_{[t\ge \tau]}\right]. \label{expression_useful_bi}
\end{eqnarray}
The right-hand side of \eqref{limit_moment1} is thus computed as
\begin{eqnarray*}
&&\frac{1}{\nbE(\tau)}{\bf 1}\pi \int_0^\infty b_j(t) dt\\
&=& \frac{1}{\nbE(\tau)}{\bf 1}\pi \frac{\mu_j}{\mu_j+\alpha}\ \Delta_j P \ \int_0^\infty \nbE\left[e^{-\mu_j (t-\tau)}\tilde{\psi}({\bf 0},t-\tau)\nbu_{[t\ge \tau]}\right] dt\\
&=& \frac{1}{\nbE(\tau)}{\bf 1}\pi \frac{\mu_j}{\mu_j+\alpha}\ \Delta_j P \ \nbE\left[ \int_\tau^\infty e^{-\mu_j (t-\tau)}\tilde{\psi}({\bf 0},t-\tau)dt \right]\\
&=& \frac{1}{\nbE(\tau)}{\bf 1}\pi \frac{\mu_j}{\mu_j+\alpha}\ \Delta_j P \ \hat{\psi}({\bf 0},\mu_j),
\end{eqnarray*}
and in turn, \eqref{expression_Mi_expo_service} is obtained thanks to \eqref{expression_psi_hat}.
\end{proof}
Let us note that the previous proof enables us to similarly obtain the expression of $\hat{b}_j(h)$ defined in \eqref{hatbi} thanks to \eqref{expression_useful_bi} as follows
\begin{eqnarray}
\hat{b}_j(h)&=& \frac{\mu_j}{\mu_j+\alpha}\ \Delta_j P \ \int_0^\infty e^{-ht} \nbE\left[e^{-\mu_j (t-\tau)}\tilde{\psi}({\bf 0},t-\tau)\nbu_{[t\ge \tau]}\right] dt\nonumber\\
&=& \frac{\mu_j}{\mu_j+\alpha}\ \Delta_j P \  \nbE\left[\int_\tau^\infty e^{-h(t-\tau)}e^{-\mu_j (t-\tau)}\tilde{\psi}({\bf 0},t-\tau) dt\ .e^{-h\tau}\right] \nonumber\\
&=& \frac{\mu_j}{\mu_j+\alpha}\ \Delta_j P \  \hat{\psi}({\bf 0},\mu_j+h).{\cal L}^\tau (h),\quad h>0 .\label{expression_useful_bi_hat} 
\end{eqnarray}
\begin{theorem}\label{theo_second_limit_moment_expo}
The asymptotic result for the second moment jointly to the state of $X_{N_t}$ in \eqref{limit_moment2} can be precisely expressed as
\begin{multline}\label{expression_Mii_expo_service}
M_{jj'}(t) \longrightarrow \frac{1}{\nbE(\tau)}\bigg(\frac{\mu_j}{\mu_j+\alpha}\bigg)\bigg( \frac{\mu_{j'}}{\mu_{j'}+\alpha} \bigg)\bigg[\frac{1-{\cal L}^\tau (\mu_j+\mu_{j'})}{\mu_j+\mu_{j'}}\bigg]
{\bf 1}\pi\Big\{ \Delta_j \Delta_{j'}+ {\cal L}^\tau (\mu_j)\  \Delta_j P \left( I-{\cal L}^\tau(\mu_j)P\right)^{-1} \Delta_{j'} \\
 + {\cal L}^\tau (\mu_{j'}) \ \Delta_{j'} P \left( I-{\cal L}^\tau(\mu_{j'})P\right)^{-1} \Delta_j \Big\}  P \  \left( I-{\cal L}^\tau(\mu_{j'}+\mu_j)P\right)^{-1}
\end{multline}
as $t\to \infty$, when $j,j'=1,...,k$, $j\neq j'$, and
\begin{multline}\label{expression_Mi_expo_servicebis}
M_{jj'}(t)\longrightarrow \frac{1}{\nbE(\tau)} \bigg[ \frac{1-{\cal L}^\tau (\mu_j )}{\mu_j+2\alpha} \bigg]{\bf 1}\pi\Delta_j^2 P \ \left( I-{\cal L}^\tau(\mu_j)P\right)^{-1} \\
+ \frac{2{\cal L}^\tau (\mu_j)}{\nbE(\tau)} \bigg[  \frac{1- {\cal L}^\tau (2\mu_j)}{2\mu_j}\bigg]
\left(\frac{\mu_j}{\mu_j+\alpha} \right)^2{\bf 1}\pi\Delta_j P
\left( I-{\cal L}^\tau(\mu_j)P\right)^{-1}\Delta_j P \  \left( I-{\cal L}^\tau(2\mu_j)P\right)^{-1}
\end{multline}
as $t\to \infty$, when $j,j'=1,...,k$, $j=j'$. We remark that \eqref{expression_Mii_expo_service} and \eqref{expression_Mi_expo_servicebis} still hold when $\mu_j$ or $\mu_{j'}$ is infinite, i.e. when the corresponding delays $L_j$ or $L_{j'}$ are $0$.
\end{theorem}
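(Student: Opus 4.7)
The plan is to evaluate
\[
\int_0^\infty b_{ii'}(t)\,dt
\]
in closed form and then plug it into the general limit \eqref{limit_moment2} of Lemma \ref{lemma_asymptotic_moment_general}. Looking at the definition \eqref{bi2}, the integrand splits into three pieces: a ``quadratic'' piece containing $[\partial_{s_{i'}}\partial_{s_i}\tilde{\pi}(s,\cdot)]_{s={\bf 0}}$ alongside $P\tilde{\psi}({\bf 0},\cdot)$, and two symmetric ``cross'' pieces each containing a single first derivative together with one of the first moments $M_i(\cdot)$ or $M_{i'}(\cdot)$. I plan to integrate each of the three pieces separately and then sum.

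For the quadratic piece, independence of $L_i$ and $L_{i'}$ when $i\neq i'$ combined with formula \eqref{T10a} and \eqref{derive2_pi_tilde} yields an explicit expression of the form $c_{i,i'}\,e^{-(\mu_i+\mu_{i'}+2\delta)r}\,\Delta_i\Delta_{i'}$ with $c_{i,i'}=\mu_i\mu_{i'}/[(\mu_i+\delta)(\mu_{i'}+\delta)]$. The same manipulation performed in the proof of Theorem \ref{prop_Mi_expo_service} then produces, after swapping the $t$- and $y$-integrations, a factor $\hat{\psi}({\bf 0},\mu_i+\mu_{i'}+2\delta)$ which \eqref{expression_psi_hat} puts into closed form. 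For the cross piece carrying $M_{i'}$, the same swap leaves $\hat{M}_{i'}(\mu_i+\delta)$; I then expand it via \eqref{expression_Mi_hat} and substitute the closed-form expression \eqref{expression_useful_bi_hat} of $\hat{b}_{i'}$ at $h=\mu_i+\delta$. The key observation is that this evaluation yields $\hat{\psi}({\bf 0},\mu_{i'}+\delta+h)=\hat{\psi}({\bf 0},\mu_i+\mu_{i'}+2\delta)$, so the same Laplace-transform argument reappears. The third cross piece is handled by the symmetry $i\leftrightarrow i'$. Summing the three contributions, factoring out the common prefactor and the common right factor $P(I-{\cal L}^\tau(\mu_i+\mu_{i'}+2\delta)P)^{-1}$, produces exactly the bracketed expression of \eqref{expression_Mii_expo_service}.

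The case $i=i'$ follows the same plan with two small modifications. First, since $L_i$ is a single variable, \eqref{derive2_pi_tilde} now requires the single LT at rate $2\delta$, so the parameter $\mu_i+\mu_{i'}+2\delta$ is replaced throughout the quadratic piece by $\mu_i+2\delta$, and $\Delta_i\Delta_{i'}$ by $\Delta_i^2$. Second, the two cross pieces coincide, hence the factor $2$ in the second summand of \eqref{expression_Mi_expo_servicebis}; the argument of the outermost resolvent now becomes $2\mu_i+2\delta$ via the same $\hat{M}_i$--$\hat{b}_i$--$\hat{\psi}$ chain. The degenerate cases $\mu_i=\infty$ (i.e.\ $L_i\equiv 0$) follow by taking the corresponding limits in the closed-form expressions, and agree trivially with the fact that $\tilde{Z}_i(t)\equiv 0$ in that case. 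The main obstacle will be purely combinatorial bookkeeping: the matrix products are non-commutative, so one must ensure that $\Delta_i$, $\Delta_{i'}$, $P$, and the three distinct resolvents $(I-{\cal L}^\tau(h)P)^{-1}$ at $h=\mu_i+\delta,\ \mu_{i'}+\delta,\ \mu_i+\mu_{i'}+2\delta$ appear in precisely the right positions for the three contributions to telescope into the stated expression.
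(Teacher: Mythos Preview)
Your proposal is correct and follows essentially the same approach as the paper: split $\int_0^\infty b_{ii'}(t)\,dt$ into the quadratic piece and the two cross pieces, reduce the quadratic piece to $\hat{\psi}({\bf 0},\cdot)$ via \eqref{expression_psi_hat}, and reduce each cross piece to $\hat{M}_{i'}(\mu_i+\delta)$ (resp.\ $\hat{M}_{i}(\mu_{i'}+\delta)$) via the chain \eqref{expression_Mi_hat}--\eqref{expression_useful_bi_hat}--\eqref{expression_psi_hat}, then assemble and factor. The only cosmetic difference is that the paper writes out the two integrals \eqref{Mi2_expo_service1}--\eqref{Mi2_expo_service2} separately rather than describing the three-term decomposition in words, but the computations and the key identities invoked are identical.
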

\begin{proof}
One first computes that 
\begin{equation}\label{expectation_expo_service_Mi2}
\nbE \left( e^{-\alpha (L_j-r)}\nbu_{[L_j>r]}e^{-\alpha (L_{j'}-r)}\nbu_{[L_{j'}>r]}\right)= 
\left\{
\begin{array}{ll}
\frac{\mu_j}{\mu_j+\alpha} \frac{\mu_{j'}}{\mu_{j'}+\alpha} e^{-\mu_j  r} e^{-\mu_{j'} r} & \quad \mbox{if } j\neq j',\\
\frac{\mu_j}{\mu_j+2\alpha} e^{-\mu_j r}& \quad \mbox{if } j=j'
\end{array}
\right.
\end{equation}
for $r\ge 0$. To evaluate the integral in \eqref{limit_moment2} with \eqref{bi2}, we thus need to compute the following integrals:
\begin{eqnarray}
&& \int_0^\infty \int_0^t  \left.\partial_{s_{j'}}\partial_{s_j}\tilde{\pi}(s,t-y)\right|_{s={\bf 0}} P \tilde{\psi}({\bf 0},t-y)dF(y)\label{Mi2_expo_service1},\\
&& \int_0^\infty \int_0^t \left.\partial_{s_j}\tilde{\pi}(s,t-y)\right|_{s={\bf 0}}P M_{j'}(t-y)dF(y),\label{Mi2_expo_service2}
\end{eqnarray}
for $j,j'=1,...,k$. When $j\neq j'$, using (\ref{derive2_pi_tilde}) with \eqref{expectation_expo_service_Mi2} followed by applying \eqref{expression_psi_hat}, \eqref{Mi2_expo_service1} may be expressed as
\begin{eqnarray}\label{expression36}
&&\bigg(\frac{\mu_j}{\mu_j+\alpha} \bigg)\bigg(\frac{\mu_{j'}}{\mu_{j'}+\alpha}\bigg)\Delta_j\Delta_{j'}P\int_0^\infty \nbE\left[ e^{-(\mu_j +\mu_{j'}) (t-\tau)} \tilde{\psi}({\bf 0},t-\tau)\nbu_{[t\ge \tau]}\right]dt\nonumber\\
&&= \bigg(\frac{\mu_j}{\mu_j+\alpha} \bigg)\bigg(\frac{\mu_{j'}}{\mu_{j'}+\alpha}\bigg)\Delta_j\Delta_{j'}P \ \hat{\psi}({\bf 0},\mu_j+\mu_{j'})\nonumber\\
&&= \bigg(\frac{\mu_j}{\mu_j+\alpha}\bigg)\bigg( \frac{\mu_{j'}}{\mu_{j'}+\alpha} \bigg)\bigg[\frac{1-{\cal L}^\tau (\mu_j+\mu_{j'})}{\mu_j+\mu_{j'}}\bigg] \Delta_j\Delta_{j'}P \ \left( I-{\cal L}^\tau(\mu_j+\mu_{j'})P\right)^{-1} .\nonumber\\
\end{eqnarray}
When $j=j'$, similar computation yields that \eqref{Mi2_expo_service1} is expressed as
\begin{equation}\label{eqi}
\bigg[ \frac{1-{\cal L}^\tau (\mu_j)}{\mu_j+2\alpha}\bigg] \Delta_j^2 P \ \left( I-{\cal L}^\tau(\mu_j)P\right)^{-1},
\end{equation}
where $\Delta_j^2=\mathrm{diag} [ x_j^2,\ x=(x_1,...,x_k)\in{\cal S}]$ for $j=1,...,k$. Turning to \eqref{Mi2_expo_service2}, replacing \eqref{derive1_pi_tilde} with (\ref{T10a}) followed by using \eqref{expression_Mi_hat} and \eqref{expression_useful_bi_hat} with \eqref{expression_psi_hat} yields
\begin{eqnarray}
&& \frac{\mu_j}{\mu_j+\alpha} \Delta_j P\ \int_0^\infty \nbE \left[ e^{-\mu_j(t-\tau)} M_{j'}(t-\tau) \nbu_{[t\ge \tau]}\right]dt= \frac{\mu_j}{\mu_j+\alpha} \Delta_j P\ \hat{M}_{j'}(\mu_j)\nonumber\\
&=& \frac{\mu_j}{\mu_j+\alpha} \Delta_j P\ \left( I-{\cal L}^\tau(\mu_j)P\right)^{-1} \hat{b}_{j'}(\mu_j)\nonumber\\
&=& \bigg(\frac{\mu_j}{\mu_j+\alpha}\bigg)\bigg( \frac{\mu_{j'}}{\mu_{j'}+\alpha}\bigg){\cal L}^\tau (\mu_j)\Delta_j P \left( I-{\cal L}^\tau(\mu_j)P\right)^{-1} \ \Delta_{j'} P \  \hat{\psi}({\bf 0},\mu_{j'}+\mu_j)\nonumber\\
&=& \bigg(\frac{\mu_j}{\mu_j+\alpha}\bigg)\bigg( \frac{\mu_{j'}}{\mu_{j'}+\alpha}
\bigg)\bigg[\frac{1- {\cal L}^\tau (\mu_{j'}+\mu_j)}{\mu_{j'}+\mu_j}\bigg]{\cal L}^\tau (\mu_j) 
\Delta_j P\left( I-{\cal L}^\tau(\mu_j)P\right)^{-1} \ \Delta_{j'} P \  \left( I-{\cal L}^\tau(\mu_{j'}+\mu_j)P\right)^{-1} .\nonumber\\
\label{expression37}
\end{eqnarray}
Then, gathering expressions \eqref{expression36} and \eqref{expression37} for \eqref{Mi2_expo_service1} and \eqref{Mi2_expo_service2} respectively yields \eqref{expression_Mii_expo_service}. Also, (\ref{expression_Mi_expo_servicebis}) is obtained with the help of (\ref{eqi}) and (\ref{expression37}).
\end{proof}
\begin{theorem}
The asymptotic result for the expectation of the workload jointly to the state of $X_{N_t}$ in \eqref{limit_workload} can be precisely expressed as
\begin{equation}\label{expression_Wi_expo_service}
W_j(t) \longrightarrow \frac{1}{\mu_j^2} \bigg[\frac{1- {\cal L}^\tau(\mu_j)}{\nbE(\tau)}\bigg]{\bf 1}\pi\Delta_j P\;  \left( I-{\cal L}^\tau(\mu_j)P\right)^{-1},\quad t\to\infty, \   j=1,...,k.
\end{equation}
\end{theorem}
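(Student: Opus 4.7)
The plan is to evaluate the general limit \eqref{limit_workload} in Proposition \ref{prop_workload} under the exponential assumption $L_i\sim\mathcal{E}(\mu_i)$, following exactly the pattern of the proof of Theorem \ref{prop_Mi_expo_service}. The required ingredients are (i) an explicit form for the integrand $\ell_i(t)$ defined in \eqref{di}, and (ii) the closed-form expression \eqref{expression_psi_hat} for the Laplace transform $\hat\psi(\mathbf{0},h)$. Everything then reduces to a Fubini computation.

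First, I would specialize \eqref{di_extra}. A direct calculation (substitution $u=L_i-r$ in the exponential density) gives
\[
\nbE\bigl((L_i-r)\nbu_{[L_i>r]}\bigr)=\frac{1}{\mu_i}e^{-\mu_i r},\qquad r\ge 0,
\]
so that
\[
\left[-\frac{\partial}{\partial\delta}\left[\partial_{s_i}\tilde{\pi}(s,r)\right]_{s={\bf 0}}\right]_{\delta=0}=\frac{1}{\mu_i}e^{-\mu_i r}\,\Delta_i.
\]
Substituting into \eqref{di} and recognizing the resulting integral as an expectation with respect to $\tau$ yields, in complete analogy with \eqref{expression_useful_bi},
\[
\ell_i(t)=\frac{1}{\mu_i}\,\Delta_i P\,\nbE\!\left[e^{-\mu_i(t-\tau)}\tilde{\psi}({\bf 0},t-\tau)\nbu_{[t\ge\tau]}\right].
\]

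Next I would integrate in $t$, interchange expectation and integral by Fubini (the integrand is nonnegative, since $\tilde\psi(\mathbf{0},\cdot)$ has nonnegative entries), and perform the change of variable $u=t-\tau$:
\[
\int_0^\infty\ell_i(t)\,dt=\frac{1}{\mu_i}\,\Delta_i P\,\nbE\!\left[\int_0^\infty e^{-\mu_i u}\tilde\psi({\bf 0},u)\,du\right]=\frac{1}{\mu_i}\,\Delta_i P\,\hat\psi({\bf 0},\mu_i).
\]
Applying \eqref{expression_psi_hat} with $h=\mu_i$ (valid since $\mu_i>0$) then gives
\[
\int_0^\infty\ell_i(t)\,dt=\frac{1}{\mu_i^2}\bigl[1-{\cal L}^\tau(\mu_i)\bigr]\,\Delta_i P\left(I-{\cal L}^\tau(\mu_i)P\right)^{-1}.
\]
Finally, plugging this into \eqref{limit_workload} and pulling the scalar $[1-{\cal L}^\tau(\mu_i)]/[\mu_i^2\,\nbE(\tau)]$ in front of $\mathbf{1}\pi$ yields \eqref{expression_Wi_expo_service}.

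There is no real obstacle: both computations parallel those carried out in the proof of Theorem \ref{prop_Mi_expo_service} with $\delta$ formally set to $0$ and the exponential factor $\mu_i/(\mu_i+\delta)$ replaced by $1/\mu_i$ (coming from the extra derivative in $\delta$). The only point that requires a brief justification is the Fubini step and the legitimacy of evaluating the Laplace transform $\hat\psi(\mathbf{0},h)$ at $h=\mu_i$, both of which are immediate since $\mu_i>0$ and the relevant integrand is nonnegative and bounded componentwise by $\nbE(P^{N_{\cdot}})$, whose entries are between $0$ and $1$.
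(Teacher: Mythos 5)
Your proof is correct and follows exactly the same route as the paper: compute $\nbE((L_i-r)\nbu_{[L_i>r]})=e^{-\mu_i r}/\mu_i$, substitute into \eqref{di} to express $\ell_i(t)$ via $\tilde\psi({\bf 0},\cdot)$, then integrate in $t$ and apply \eqref{expression_psi_hat}, mirroring the Laplace-transform computation in Theorem \ref{prop_Mi_expo_service}. The paper compresses the final Fubini step by referring back to that earlier proof, while you spell it out; the content is identical.
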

\begin{proof}
When $L_j\sim {\cal E}(\mu_j)$, one straightforward verifies that $\nbE((L_j-r)\nbu_{[L_j>r]})=e^{-\mu_j r}/\mu_j$. Then, one has from \eqref{di} and \eqref{di_extra} that 
$$\ell_j(t)=\frac{1}{\mu_j} \ \Delta_j\ P\ \nbE\left(e^{-\mu_j (t-\tau)}  \tilde{\psi}({\bf 0},t-\tau)\nbu_{[t\ge \tau ]}\right),$$
from which the computation of $\frac{1}{\nbE(\tau)} {\bf 1}\pi \int_0^\infty \ell_j(t)dt$ in (\ref{limit_workload}) is led similarly to that of $\frac{1}{\nbE(\tau)} {\bf 1}\pi \int_0^\infty b_j(t)dt$ in Theorem \ref{prop_Mi_expo_service}. Hence, the result \eqref{expression_Wi_expo_service} follows by using (\ref{expression_psi_hat}).
\end{proof}

\begin{example}\normalfont
This example illustrates numerically convergences of \eqref{expression_Mi_expo_service} and \eqref{expression_Mi_expo_servicebis} for the first and second joint moments when $\alpha=0.1$. This was done by simulating $(Z(t),X_{N_t})$ in \eqref{def_Z_t} for (small) $t=30$ and (large) $t=100$ and estimating the left-hand side of \eqref{expression_Mi_expo_service} and \eqref{expression_Mi_expo_servicebis} thanks to the Law of Large Numbers (Monte Carlo) through $n=500$ iterations. The right-hand side of \eqref{expression_Mi_expo_service} and \eqref{expression_Mi_expo_servicebis} were computed explicitly by considering for the interarrival $\tau$ a Gamma distribution with shape $a$ and rate $b$ with the LT ${\cal L}^\tau(u)=\frac{1}{(1+u/b)^a}$ and $\nbE(\tau)=\frac{a}{b}$. We consider two cases of $(a,b)$ choosing $(1,10)$ (i.e. $\tau\sim {\cal E}(10)$), so that $\nbE(\tau)=0.1$, and $(0.75, 15)$ with $\nbE(\tau)=0.05$. 
Suppose that $k=1$, i.e. a one dimensional process $\{Z(t),\ t\ge 0\}$ and a Markov Chain $(X_i)_{i\in\nbN}$ with state space $\{0,1\}$ (i.e. $K=1$).  Assume that the transition matrix is given by $P=\left( \begin{array}{cc}
0.25 & 0.75\\ 0.5 & 0.5
\end{array}\right)$, with stationary distribution $\pi=(0.4, 0.6)$. All simulations and computations were done with {\tt Scilab}. We finally suppose that all delays have same distribution $L\sim {\cal E}(1)$.
\begin{table}[!h]
\begin{center}
\begin{tabular}{|c|c|c|}
\hline
 & $(a,b)=(1,10)$ & $(a,b)=(0.75,15)$\\
 \hline
Exact 
& 
$\left( \begin{array}{cc}
   2.222  & 3.232\\
   2.222  & 3.232
\end{array}\right)$
& 
$\left( \begin{array}{cc}
   4.405  & 6.504\\
   4.405 &  6.504
\end{array}\right)$
\\
\hline
Monte Carlo for $t=30$
&
$\left( \begin{array}{cc}
  1.758 &  3.152\\
  2.218 &  3.566
\end{array}\right)$
& 
$\left( \begin{array}{cc}
   4.185  & 6.645\\
   3.925  & 6.503
\end{array}\right)$
\\
\hline
Monte Carlo for $t=100$
&
$\left( \begin{array}{cc}
2.242  & 3.153\\
1.892  & 3.574
\end{array}\right)$
& 
$\left( \begin{array}{cc}
  4.476 &  6.344\\
   4.411  & 6.509  
\end{array}\right)$
\\
\hline
\end{tabular}
\end{center}
\caption{First-order joint moments $M_1(t)$}
\end{table}

\begin{table}[!h]
\begin{center}
\begin{tabular}{|c|c|c|}
\hline
 & $(a,b)=(1,10)$ & $(a,b)=(0.75,15)$\\
 \hline
Exact 
& 
$\left( \begin{array}{cc}
  14.196 &  20.188\\
   14.196 &  20.188
\end{array}\right)$
& 
$\left( \begin{array}{cc}
   52.596  & 76.623\\
   52.596  & 76.623
\end{array}\right)$
\\
\hline
Monte Carlo for $t=30$
&
$\left( \begin{array}{cc}
   11.017  & 23.043\\
   12.322  & 22.652
\end{array}\right)$
& 
$\left( \begin{array}{cc}
   48.302  & 78.761\\
   47.395  & 82.891
\end{array}\right)$
\\
\hline
Monte Carlo for $t=100$
&
$\left( \begin{array}{cc}
   12.543 &  21.094\\
   14.192 &  22.089 
\end{array}\right)$
& 
$\left( \begin{array}{cc}
  53.635 &  77.217\\
  45.289 &  83.454
\end{array}\right)$
\\
\hline
\end{tabular}
\end{center}
\caption{Second-order joint moments $M_{11}(t)$}
\end{table}
Both tables above illustrate the convergence of the first-order and the second-order joint moments to the values calculated in \eqref{expression_Mi_expo_service} and \eqref{expression_Mi_expo_servicebis} respectively. Also note that this simple example gives us some idea of potential applications of the model. First, from a queueing point of view, the numerical model described here explains some infinite server queue system where, if an arriving customer is not admitted in the queue at time $T_i$ (i.e. $X_i=0$) then the next one arriving at time $T_{i+1}$ is accepted with high probability $0.75$; this is especially interesting in a congestion regulation context, where one may choose to accept incoming customers more easily when the previous ones were rejected with high probability. Furthermore, it would be interesting to utilize the model for an insurance company facing a situation where claims occurring at time $T_i$ are either immediately reported when $X_i=0$ or reported with delay $L_i$ when $X_i=1$. From $P$, some interesting feature appears as if a claim is not reported (resp. reported) at time $T_i$ then the next one is reported (resp. not reported) at time $T_{i+1}$ with probability $0.5$ (resp. with probability $0.75$). This model could reflect the policyholder's certain type of behaviour, e.g.  after immediately reporting a claim at time $T_i$, the policyholder prefers to delay reporting of the next claim at time $L_{i+1}+T_{i+1}$ with probability $0.75$ to avoid the increase of premium when the policyholder renews the insurance. On the other hand, it can also explain the opposite situation with a different transition matrix $P$. For example, a worker understands that if the reporting delay of workplace injury is longer, then it is harder to prove the injury is work related under the workers compensation claim. In this case, the transition probability from $X_{i}=0$ to $X_{i+1}=1$ is much lower, that is, the policyholder prefers to report the claim immediately.  
\end{example}

\subsection{Exponentially distributed interarrival times}
We now suppose in this subsection that $\tau\sim{\cal E}(\lambda)$, i.e. that arrivals occur according to a Poisson process with intensity $\lambda >0$. From Proposition \ref{prop_Poisson_psi}, it has been shown that the transient mgf $\tilde{\psi}(s,t)$ is the unique solution to an ordinary differential equation. Under this Poisson arrival setting, we shall derive closed-form expressions for the the transient behavior of the first and second moments as well as the expectation of the workload. To begin, a closed-form expression for $b_j(t)$ in \eqref{bi1} is obtained. In this case,
one finds that \eqref{psi_tilda_zero} becomes
\begin{equation}\label{tpsi0r}
\tilde{\psi}({\bf 0},r)=\nbE\left( P^{N_r} \right)=  e^{\lambda r (P-I)},\quad r>0,
\end{equation}
whence $b_j(t)$ in (\ref{bi1}) with \eqref{derive1_pi_tilde} may be expressed as 
\begin{eqnarray*}
b_j(t)&= &\Delta_j \int_0^t \nbE \left( e^{-\alpha (L_j-(t-y))}\nbu_{[L_j>t-y]}\right)  P e^{\lambda (t-y) (P-I)}   \lambda e^{-\lambda y}dy\nonumber\\
&=& \lambda\Delta_j e^{-\lambda t}\int_0^t \nbE \left( e^{-\alpha (L_j-y)}\nbu_{[L_j>y]}\right) P e^{\lambda yP}  dy .
\end{eqnarray*}
Furthermore, one checks easily that for all $t\ge 0$
\begin{equation}\label{bi_Poisson}
b_j'(t)+\lambda b_j(t)= \lambda \nbE \left( e^{-\alpha (L_j-t)}\nbu_{[L_j>t]}\right) \Delta_j P e^{\lambda t (P-I)}.
\end{equation}
\begin{theorem}\label{prop_exact_expression_Mi_Poisson}
One has the exact expression for the first joint moment given by
\begin{equation}\label{exact_expression_Mi_Poisson}
M_j(t)=\lambda e^{\lambda t (P-I)} \int_0^t \nbE \left( e^{-\alpha (L_j-v)}\nbu_{[L_j>v]}\right) e^{-\lambda v (P-I)}  \Delta_j P e^{\lambda v (P-I)}   dv,\quad t\ge 0,~j=1,...,k.
\end{equation}
\end{theorem}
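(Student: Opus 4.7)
The plan is to specialize the renewal equation for $M_i(t)$ from Proposition \ref{prop_renewal_equations_moments} to the Poisson case, reduce it to a first-order linear matrix ODE, and solve by the integrating factor method. Recall that, with $\tau \sim \mathcal{E}(\lambda)$, equation \eqref{renewal_Mi} becomes
\begin{equation*}
M_i(t) = b_i(t) + \lambda \int_0^t P\, M_i(t-y)\, e^{-\lambda y}\, dy = b_i(t) + \lambda e^{-\lambda t}\int_0^t P\, M_i(u)\, e^{\lambda u}\, du.
\end{equation*}

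First I would differentiate this identity with respect to $t$, which eliminates the convolution and yields
\begin{equation*}
M_i'(t) = b_i'(t) - \lambda\big(M_i(t)-b_i(t)\big) + \lambda P\, M_i(t) = \big(b_i'(t)+\lambda b_i(t)\big) + \lambda(P-I)\, M_i(t).
\end{equation*}
Now I would substitute the already-established identity \eqref{bi_Poisson} for $b_i'(t)+\lambda b_i(t)$, obtaining the inhomogeneous linear matrix ODE
\begin{equation*}
M_i'(t) - \lambda(P-I)\, M_i(t) = \lambda\, \nbE\!\left( e^{-\delta(L_i - t)}\nbu_{[L_i>t]}\right)\Delta_i P\, e^{\lambda t (P-I)},
\end{equation*}
with the initial condition $M_i(0) = \mathbf{0}$ (which follows from $\tilde Z(0)=\mathbf 0$ in \eqref{def_moments}).

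Next I would introduce the integrating factor $e^{-\lambda t(P-I)}$. The key point is that this factor commutes with $\lambda(P-I)$ (since any matrix commutes with its own exponential), so
\begin{equation*}
\frac{d}{dt}\Big[ e^{-\lambda t(P-I)}\, M_i(t)\Big] = e^{-\lambda t(P-I)}\Big[M_i'(t) - \lambda(P-I)\, M_i(t)\Big].
\end{equation*}
Feeding in the ODE and integrating from $0$ to $t$ using $M_i(0)=\mathbf 0$ gives
\begin{equation*}
e^{-\lambda t(P-I)}\, M_i(t) = \lambda \int_0^t \nbE\!\left( e^{-\delta(L_i - v)}\nbu_{[L_i>v]}\right) e^{-\lambda v(P-I)}\Delta_i P\, e^{\lambda v(P-I)}\, dv,
\end{equation*}
after which left-multiplying by $e^{\lambda t(P-I)}$ yields exactly \eqref{exact_expression_Mi_Poisson}.

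The only non-routine step is the commutativity check that justifies the integrating factor: one cannot move $e^{\pm \lambda v(P-I)}$ across $\Delta_i P$ inside the integrand because these matrices do not commute in general. I therefore expect that the main obstacle, if any, is just being careful to apply the integrating factor only against the factor $(P-I)$ (which commutes with its own exponential) and to leave the inner product $e^{-\lambda v(P-I)}\Delta_i P\, e^{\lambda v(P-I)}$ intact inside the integral. A quick sanity check at $t=0$ (both sides equal $\mathbf 0$) and the verification that the resulting $M_i(t)$ plugged back into \eqref{renewal_Mi} recovers the renewal identity would complete the argument.
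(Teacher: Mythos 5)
Your proposal is correct and follows essentially the same route as the paper: reduce the renewal equation to the linear matrix ODE $M_i'(t)=\big(b_i'(t)+\lambda b_i(t)\big)+\lambda(P-I)M_i(t)$ with $M_i(0)=\mathbf 0$, then substitute \eqref{bi_Poisson} and solve. The only difference is presentational — the paper simply asserts that the solution to \eqref{EDO_Mi_Poisson} is \eqref{exact_expression_Mi_Poisson}, whereas you spell out the integrating-factor step and the commutativity caveat, which is a welcome clarification but not a different argument.
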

\begin{proof}
We aim at obtaining a differential equation satisfied by $M_j(t)$. Remember from \eqref{renewal_Mi} that it satisfies the renewal matrix equation, with $dF(y)=\lambda e^{-\lambda y}dy$ as well as a change of variable $t-y:=y$,
$$
M_j(t)=b_j(t) + P \int_0^t M_j(t-y) \lambda e^{-\lambda y}dy = b_j(t) + e^{-\lambda t} P \int_0^t M_j(y) \lambda e^{\lambda y}dy,
$$
which, upon differentiation, and thanks to \eqref{bi_Poisson}, leads to the first-order matrix differential equation
\begin{eqnarray}\label{EDO_Mi_Poisson}
M_j'(t) &=& b_j'(t)   +\lambda  b_j(t) - \lambda M_j(t) + \lambda P M_j(t)\nonumber\\
&=& \lambda \nbE \left( e^{-\alpha (L_j-t)}\nbu_{[L_j>t]}\right) \Delta_j P e^{\lambda t (P-I)} + \lambda (P- I)M_j(t)
\end{eqnarray}
with initial condition $M_j(0)=0$. The solution to \eqref{EDO_Mi_Poisson} is given by \eqref{exact_expression_Mi_Poisson}.
\end{proof}

Next, using \eqref{tpsi0r}, \eqref{bi2} with \eqref{derive1_pi_tilde} and \eqref{derive2_pi_tilde} in this case is given by
\begin{eqnarray*}
b_{jj'}(t)&=&  \Delta_j\Delta_{j'} \int_0^t \nbE \left( e^{-\alpha (L_j-(t-y))}\nbu_{[L_j>t-y]}e^{-\alpha (L_{j'}-(t-y))}\nbu_{[L_{j'}>t-y]}\right)P e^{\lambda (t-y) (P-I)} \lambda e^{-\lambda y}dy\nonumber\\
&&+ \Delta_j \int_0^t \nbE \left( e^{-\alpha (L_j-(t-y))}\nbu_{[L_j>t-y]}\right)P M_{j'}(t-y)\lambda e^{-\lambda y}dy\\
&&+ \Delta_{j'} \int_0^t \nbE \left( e^{-\alpha (L_{j'}-(t-y))}\nbu_{[L_{j'}>t-y]}\right)P M_{j}(t-y)\lambda e^{-\lambda y}dy,
\end{eqnarray*}
and thus one finds the following relation
\begin{multline}\label{bi2_Poisson}
b_{jj'}'(t)+\lambda b_{jj'}(t)= \lambda \nbE \left( e^{-\alpha (L_j-t)}\nbu_{[L_j>t]}e^{-\alpha (L_{j'}-t)}\nbu_{[L_{j'}>t]}\right) \Delta_j\Delta_{j'}
P e^{\lambda t (P-I)}\\
+\lambda \nbE \left( e^{-\alpha (L_j-t)}\nbu_{[L_j>t]}\right) \Delta_j P M_{j'}(t) + \lambda \nbE \left( e^{-\alpha (L_{j'}-t)}\nbu_{[L_{j'}>t]}\right)\Delta_{j'} P M_{j}(t),
\end{multline}
where $M_j(t)$ is given in \eqref{exact_expression_Mi_Poisson}.
\begin{theorem}\label{theo_second_moment_Poisson}
One has the exact expression for the second moment given by
\begin{multline}\label{exact_expression_Mii_Poisson}
M_{jj'}(t)= \lambda e^{\lambda t (P-I)} \int_0^t e^{-\lambda v (P-I)}
\left\{ \nbE \left( e^{-\alpha (L_j-v)}\nbu_{[L_j>v]}e^{-\alpha (L_{j'}-v)}\nbu_{[L_{j'}>v]}\right)
\Delta_j\Delta_{j'} Pe^{\lambda v (P-I)} \right.\\
\left. +  \nbE \left( e^{-\alpha (L_j-v))}\nbu_{[L_j>v]}\right) \Delta_j P M_{j'}(v)
 +  \nbE \left( e^{-\alpha (L_{j'}-v))}\nbu_{[L_{j'}>v]}\right)\Delta_{j'} P M_{j}(v)\right\} dv,\quad t\ge 0,
\end{multline}
for $j,j'=1,...,k$, where $M_{j}(v)$ and $M_{j'}(v)$ are given by \eqref{exact_expression_Mi_Poisson} in Theorem \ref{prop_exact_expression_Mi_Poisson}.
\end{theorem}
Let us note that the structure of the expression of $M_{jj'}(t)$ is different according to whether $j=j'$ or $j\neq j'$, as $\nbE  ( e^{-\alpha (L_j-v)}\nbu_{[L_j>v]}e^{-\alpha (L_{j'}-v)}\nbu_{[L_{j'}>v]})$ is equal to $\nbE ( e^{-2\alpha (L_j-v)}\nbu_{[L_j>v]})$ if $j=j'$, or $\nbE  ( e^{-\alpha (L_{j}-v)}\nbu_{[L_j>v]}  ) \nbE  (e^{-\alpha (L_{j'}-v)}\nbu_{[L_{j'}>v]} )$ when $j\neq j'$, by independence.
\begin{proof}
Similar to the proof of Theorem \ref{prop_exact_expression_Mi_Poisson}, we write the renewal equation \eqref{renewal_Mi2} satisfied by $M_{jj'}(t)$ as
$M_{jj'}(t)=b_{jj'}(t) + P\int_0^t M_{jj'}(t-y)\lambda e^{-\lambda y}dy$, $t\ge 0$. The same first-order differential equation analysis with the expression of $b_{jj'}'(t)+\lambda b_{jj'}(t)$ given in \eqref{bi2_Poisson} yields thus the explicit expression \eqref{exact_expression_Mii_Poisson}.
\end{proof}
Finally, a transient expression for the expectation of the workload has the same structure as the first moment, and the following result is given without proof:
\begin{theorem}
One has the exact expression for the expectation of the workload given by
$$
W_j(t)=\lambda e^{\lambda t (P-I)} \int_0^t \nbE \left( (L_j-v)\nbu_{[L_j>v]}\right)e^{-\lambda v (P-I)}  \Delta_j  P e^{\lambda v (P-I)} dv,\quad t\ge 0,\ j=1,...,k.
$$
\end{theorem}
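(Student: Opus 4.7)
The proof will mirror that of Theorem \ref{prop_exact_expression_Mi_Poisson} exactly, with the $\delta$-Laplace weight $\nbE(e^{-\delta(L_i-t)}\nbu_{[L_i>t]})$ replaced throughout by the mean-residual-life weight $\nbE((L_i-t)\nbu_{[L_i>t]})$. The starting point is the matrix renewal equation \eqref{renewal_Di} for $W_i(t)$, with the driving term $\ell_i(t)$ given by \eqref{di} and \eqref{di_extra}. In the Poisson case, we substitute $dF(y)=\lambda e^{-\lambda y}dy$ and use \eqref{tpsi0r}, i.e.\ $\tilde{\psi}({\bf 0},r)=e^{\lambda r(P-I)}$, to obtain after the change of variable $v=t-y$ that
$$
\ell_i(t)=\lambda e^{-\lambda t}\Delta_i P\int_0^t \nbE\bigl((L_i-v)\nbu_{[L_i>v]}\bigr)\,e^{\lambda v P}\,dv.
$$
From this closed form I will read off the analogue of \eqref{bi_Poisson}, namely
$$
\ell_i'(t)+\lambda \ell_i(t)=\lambda\,\nbE\bigl((L_i-t)\nbu_{[L_i>t]}\bigr)\,\Delta_i P\, e^{\lambda t(P-I)},
$$
by differentiating the product $\lambda e^{-\lambda t}\cdot(\text{integral})$ and using $e^{\lambda t P}e^{-\lambda t}=e^{\lambda t(P-I)}$.

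Next, rewriting \eqref{renewal_Di} in the Poisson setting as
$$
W_i(t)=\ell_i(t)+e^{-\lambda t}P\int_0^t W_i(y)\lambda e^{\lambda y}\,dy,
$$
I differentiate in $t$ and substitute the identity above for $\ell_i'(t)+\lambda\ell_i(t)$. This yields the first-order linear matrix ODE
$$
W_i'(t)=\lambda\,\nbE\bigl((L_i-t)\nbu_{[L_i>t]}\bigr)\,\Delta_i P\,e^{\lambda t(P-I)}+\lambda(P-I)W_i(t),
$$
with initial condition $W_i(0)={\bf 0}$, which is exactly the analogue of \eqref{EDO_Mi_Poisson}.

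Finally I solve the ODE by variation of constants. Because the ``state matrix'' $\lambda(P-I)$ commutes with its own exponential $e^{\lambda t(P-I)}$, the integrating factor $e^{-\lambda t(P-I)}$ can be applied on the left without ambiguity, giving
$$
W_i(t)=\lambda\,e^{\lambda t(P-I)}\int_0^t e^{-\lambda v(P-I)}\,\nbE\bigl((L_i-v)\nbu_{[L_i>v]}\bigr)\,\Delta_i P\,e^{\lambda v(P-I)}\,dv,
$$
which, since $\nbE((L_i-v)\nbu_{[L_i>v]})$ is a scalar and commutes with matrices, is precisely the stated expression.

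The computation is essentially routine; the only mild subtlety is keeping track of non-commutativity, since $\Delta_i$ and $P$ do not commute in general. This is handled cleanly by noting that $e^{\pm\lambda v(P-I)}$ is a polynomial in $P$ and hence commutes with $P$ itself, so the integrand $e^{-\lambda v(P-I)}\Delta_i P\,e^{\lambda v(P-I)}$ is the only factor in which the ordering matters, and it is left in that non-collapsed form in the final formula. No new techniques beyond those already used for $M_i(t)$ are required.
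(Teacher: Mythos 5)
Your proof is correct and follows exactly the route the paper intends: the authors state the theorem "without proof" precisely because, as they note, the workload has "the same structure as the first moment," and your argument is the direct transcription of the proof of Theorem \ref{prop_exact_expression_Mi_Poisson}, replacing the weight $\nbE(e^{-\delta(L_i-t)}\nbu_{[L_i>t]})$ by $\nbE((L_i-t)\nbu_{[L_i>t]})$ throughout. The computation of $\ell_i'(t)+\lambda\ell_i(t)$, the derivation of the matrix ODE from the renewal equation, and the variation-of-constants step (with the commutation of $\lambda(P-I)$ and its exponential) are all handled correctly.
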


\section{Moment generating function for deterministic interarrival times}\label{sec:deterministic_interarriva}
So far, it has been shown that the (transient or limiting) distribution of process $\tilde{Z}(t)$ is hard to study explicitly in general, except for the Poisson arrivals. Hence we shall consider a specific distribution for the interarrival times being deterministic, and equal to $1$ without loss of generality to obtain some results on the mgf $\tilde{\psi}(s,t)$.
\begin{theorem}\label{theo_deterministic_psi}
Suppose that $\tau=1$ a.s., then $(\tilde{Z}(t)=\tilde{Z}(t;\alpha), X_{N_t})$ has a closed-form expression for the mgf given by
\begin{equation}\label{transient_deterministic_arrival}
\tilde{\psi}(s,t)=\tilde{\psi}(s,t;\alpha)=\bigg[ \prod_{m=0}^{t-1} \tilde{Q}(s,m)\bigg]'=  \prod_{m=1}^t \tilde{Q}(s,t-m)',\quad t\in \nbN ,
\end{equation}
where $\tilde{\pi}(s,t)$ and $\tilde{Q}(s,t)$ are given in \eqref{def_pi_Q_tilda} and (\ref{def_pi_Q_tilda1}) respectively. Besides, when $\nbE(L_j)$ is finite for all $j=1,...,k$, $\lim_{t\to \infty} \prod_{m=0}^{t} \tilde{Q}(s,m)=\prod_{m=0}^{\infty} \tilde{Q}(s,m)$ exists, and $(\tilde{Z}(t),X_{N_t})$ converges in distribution as $t\to \infty$ given $X_0=x$ towards $({\cal Z}_\infty,{\cal X}^x_\infty)\in\nbR^k\times {\cal S}$ with the joint mgf given by
\begin{equation}\label{psi_infinite_deterministic}
\tilde{\psi}_\infty (s)=\tilde{\psi}_\infty (s;\alpha)=[ \nbE\left( e^{<s,{\cal Z}_\infty>}\nbu_{[{\cal X}^x_\infty=y]}\right)]_{(x,y)\in {\cal S}^2}=\bigg[\prod_{m=0}^{\infty} \tilde{Q}(s,m) \bigg]',\quad s\in\nbR^k.
\end{equation}
\end{theorem}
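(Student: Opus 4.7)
My plan is to split the argument into (i) a direct derivation of the closed form \eqref{transient_deterministic_arrival} from Proposition \ref{prop_psi_tilda}, and (ii) a probabilistic proof of convergence in distribution that, combined with (i), yields both the existence of the infinite product and the identification \eqref{psi_infinite_deterministic}.

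For (i), I set $\tau=1$ a.s., so that $T_i=i$ for every $i$ and $N_t=t$ for every $t\in\nbN$. The expectation in \eqref{eq_psi_tilda} then disappears, giving $\tilde{\psi}(s,t)=\tilde{Q}(s,t-1)'\tilde{Q}(s,t-2)'\cdots \tilde{Q}(s,0)'$, which upon iterating $(AB)'=B'A'$ becomes $\bigl[\prod_{m=0}^{t-1}\tilde{Q}(s,m)\bigr]'$, proving \eqref{transient_deterministic_arrival}.

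For (ii), I reindex $\tilde{Z}_j(t)$ via $m=t-i$ to obtain
\[
\tilde{Z}_j(t)=\sum_{m=0}^{t-1} X_{t-m,j}\,e^{-\delta(L_{t-m,j}-m)}\nbu_{[L_{t-m,j}>m]},\qquad j=1,\ldots,k,
\]
and set $\tilde{Z}^{(M)}_j(t):=\sum_{m=0}^{M\wedge(t-1)}X_{t-m,j}\,e^{-\delta(L_{t-m,j}-m)}\nbu_{[L_{t-m,j}>m]}$. Because $|X_{t-m,j}|\le K$ and $e^{-\delta(L_{t-m,j}-m)}\nbu_{[L_{t-m,j}>m]}\le\nbu_{[L_j>m]}$, the remainder $\tilde{Z}_j(t)-\tilde{Z}^{(M)}_j(t)$ is bounded in $L^1$ by $K\sum_{m>M}\nbP(L_j>m)$, which tends to $0$ as $M\to\infty$ uniformly in $t$, thanks to $\nbE(L_j)<\infty$. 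For fixed $M$, irreducibility and aperiodicity of $P$ yield joint weak convergence of $(X_{t-M},\ldots,X_t)$ conditional on $X_0=x$ to the stationary finite-dimensional law of $(X_0,\ldots,X_M)$; combined with the independence of the iid vectors $(L_{i,\cdot})_{i\ge 1}$ from the chain, this gives joint weak convergence of $(\tilde{Z}^{(M)}(t),X_t)$. Sending $M\to\infty$ and invoking the uniform tail bound then yields $(\tilde{Z}(t),X_{N_t})\to({\cal Z}_\infty,{\cal X}^x_\infty)$ in distribution conditional on $X_0=x$.

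L\'evy's continuity theorem applied entrywise then turns this into pointwise convergence of the joint characteristic functions $\tilde{\psi}(i\theta,t)\to\tilde{\psi}_\infty(i\theta)$ for every $\theta\in\nbR^k$, which via \eqref{transient_deterministic_arrival} is precisely convergence of the matrix product $\prod_{m=0}^{t-1}\tilde{Q}(i\theta,m)$ on the imaginary axis. Analyticity of the finite products in $s$ (or dominated convergence when $s$ is real and the mgf is finite) extends the identity \eqref{psi_infinite_deterministic} to the full range of $s$ appearing in the statement. The main obstacle I anticipate is the coupling/ergodicity step: establishing the joint distributional limit of $(X_{t-M},\ldots,X_t)$ given $X_0=x$ rests on $p^{(n)}(x,y)\to\pi(y)$, after which everything else reduces to routine truncation bookkeeping.
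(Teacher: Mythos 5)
Your part (i) is exactly the paper's argument: with $T_m=m$ and $N_t=t$ the expectation in \eqref{eq_psi_tilda} is deterministic and the transpose identity gives \eqref{transient_deterministic_arrival}.

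For part (ii) you take a genuinely different route. The paper works directly on the matrix product: it writes $\tilde{Q}(s,m)=P'+A_m(s)$ with $A_m(s)=(\tilde{\pi}(s,m)-I)P'$, shows $\sum_{m\ge0}\|A_m(s)\|<\infty$ using $\nbE(L_j)<\infty$, and invokes Artzrouni's theorem on infinite products of matrices to obtain both the existence of $\prod_{m=0}^\infty\tilde{Q}(s,m)$ and its continuity at $s={\bf 0}$ (the latter via a separate estimate on $\|(\tilde{\pi}(s,m)-I)_m\|_E$). L\'evy's theorem then \emph{deduces} the convergence in distribution of $(\tilde Z(t),X_{N_t})$ from the analytic convergence. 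You instead establish the distributional limit probabilistically: reindex $\tilde Z_j(t)$ backward in time, truncate the sum at lag $M$, control the tail uniformly in $t$ in $L^1$ by $K\sum_{m>M}\nbP(L_j>m)$, and use ergodicity (irreducibility and aperiodicity) to get weak convergence of the window $(X_{t-M},\ldots,X_t)$ given $X_0=x$; the standard three-step approximation lemma then gives $(\tilde Z(t),X_{N_t})\Rightarrow(\mathcal Z_\infty,\mathcal X^x_\infty)$ directly. L\'evy's theorem now goes in the opposite direction, turning the weak convergence into pointwise convergence of the characteristic-function matrices, and the convergence of $\prod_{m=0}^{t-1}\tilde{Q}(i\theta,m)$ is obtained \emph{as a byproduct}, with continuity at ${\bf 0}$ automatic because the limit is itself a characteristic function. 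Your argument is more elementary and self-contained (no recourse to the Artzrouni infinite-product machinery), and it gives more probabilistic insight into what $\mathcal Z_\infty$ actually is (a stationary backward series). The paper's argument is more analytic and handles real arguments $s$ somewhat more explicitly via the same summability bound; both proofs, as written, really only carry out the rigorous argument on the imaginary axis, with a hand-wave toward real $s$. One small point worth making explicit in your write-up: the claim $p^{(n)}(x,y)\to\pi(y)$ needs aperiodicity, which the paper's ``$(P')^m\to({\bf 1}\pi)'$'' also requires — so you are using exactly the same hypothesis, just stating it more openly.
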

\begin{proof}
Since $T_m=m\in\nbN$ and $N_t=t\in\nbN$, \eqref{transient_deterministic_arrival} is a straightforward application of \eqref{eq_psi_tilda} in Proposition \ref{prop_psi_tilda}. 

We recall that $s=(s_1,...,s_k)$ may belong to the set $\nbS:=\{(s_1,...,s_k)\in\nbC^k|\ s_j\in i\nbR,\ j=1,...,k\}$ mentioned shortly after Definition \eqref{def_mgf}, in such as $\tilde{\psi}(s,t)$ is the characteristic function of $\tilde{Z}(t)$ jointly to $X_{N_t}$. Then, in order to prove the convergence in distribution of $(\tilde{Z}(t),X_{N_t})$ given $X_0=x$,  it suffices by L\'evy's convergence theorem to show that $\tilde{\psi}(s,t)$ in \eqref{transient_deterministic_arrival} converges towards $\tilde{\psi}_\infty (s)$ given in \eqref{psi_infinite_deterministic} for all $s\in\nbS$ and $\tilde{\psi}_\infty (s)$ is continuous at $s={\bf 0}$. This part constitutes the main bulk of the proof of the theorem.
Let $||.||$ be a submultiplicative norm on ${\cal S}\times {\cal S}$ matrices, i.e. such that $||MN|| \le ||M||.||N||$ for all matrices $M$ and $N$. 
We write from \eqref{def_pi_Q_tilda1} that
$$
\tilde{Q}(s,m)=P' + (\tilde{\pi}(s,m)-I)P'.
$$
To apply the result given in \cite{A86}, we first introduce the two following norms defined respectively on complex valued and matrices valued sequences (defined similarly as in \cite{A86})
$$
| (u_m)_{m\in\nbN}|_E:=\sum_{m=0}^\infty |u_m|,\quad || (M_m)_{m\in\nbN}||_E=\sum_{m=0}^\infty ||M_m||,
$$
where $u_m\in\nbR$ and $M_m$ is an ${\cal S}\times {\cal S}$ matrix for all $m\in\nbN$. Let us set  $A_m=A_m(s):=(\tilde{\pi}(s,m)-I)P'$ (so as to comply with the notation of the latter paper). Since  $(P')^m$ converges towards $({\bf 1}\pi)'$ as $m\to\infty$, from \cite[Theorem 2.1]{A86} it is sufficient to prove that $|| (A_m)_{m\in\nbN}||_E=\sum_{m=0}^\infty ||A_m||<+\infty$ for the existence of $\prod_{m=0}^{\infty} \tilde{Q}(s,m)$. One has 
\begin{equation}\label{Ai}
||A_m||\le ||\tilde{\pi}(s,m)-I ||.||P'||
\end{equation}
with $\tilde{\pi}(s,m)-I$ a diagonal matrix of which the $(x,x)$th component, $x\in {\cal S}$, is given from \eqref{def_pi_Q_tilda} by $\nbE ( \exp ( \sum_{j=1}^k s_jx_j e^{-\alpha (L_j-m)}\nbu_{[L_j>m]}))-1$. Using the inequality $ |e^u-1|\le e^{|u|}-1$ for all $u\in\nbC$, and remembering that $x_j\in \{0,...,K\}$ is non negative for all $j=1,...,k$, one finds for all $m\in\nbN$ that
\begin{eqnarray}
 & &\bigg|\nbE \bigg( \exp\bigg\{\sum_{j=1}^k s_jx_j e^{-\alpha (L_j-m)}\nbu_{[L_j>m]}\bigg\}\bigg)- 1 \bigg|  \le   \nbE \bigg( \bigg|  \exp\bigg\{\sum_{j=1}^k s_jx_j e^{-\alpha (L_j-m)}\nbu_{[L_j>m]} \bigg\} -1\bigg|  \bigg)\nonumber\\
 &~~~\le &\nbE \bigg( \exp\bigg\{ \bigg|\sum_{j=1}^k s_j  x_j  e^{-\alpha (L_j-m)}\nbu_{[L_j>m]}\bigg|\bigg\}-1 \bigg) \le  \nbE \bigg( \exp\bigg\{ \sum_{j=1}^k |s_j | x_j  \nbu_{[L_j>m]}\bigg\}\bigg)- 1\nonumber\\
&~~~=& \prod_{j=1}^k \left[ 1+ (e^{|s_j| x_j}-1) \nbP(L_j>m)\right]-1 =\sum_{I\subset \{1,...,k\}} \prod_{\ell \in I}\left[(e^{|s_\ell| x_\ell}-1)\nbP(L_\ell>m)\right],\label{psi_infinite_deterministic1}
\end{eqnarray}
where the independence of $L_1,...,L_k$ was used. Note now that for all $I\subset \{1,...,k\}$,
\begin{multline*}
\sum_{m=1}^\infty \prod_{\ell \in I}\left[(e^{|s_\ell| x_\ell}-1)\nbP(L_\ell>m)\right]\le \left(e^{\max(|s_1|,...,|s_k|) .K} - 1\right) ^k \sum_{m=1}^\infty \prod_{\ell \in I}  \nbP(L_\ell>m)\\
= \left(e^{\max(|s_1|,...,|s_k|) .K} - 1\right) ^k \nbE\left( \max_{\ell\in I}L_\ell \right),
\end{multline*}
which is finite thanks to the assumption that $\nbE(L_j)<+\infty$ for all $j=1,...,k$. We thus deduce from \eqref{psi_infinite_deterministic1} that $\sum_{m=0}^\infty ||\tilde{\pi}(s,m)-I ||<+\infty $ and in turn, from (\ref{Ai}) $||(A_m)_{m\in \nbN}||_E=\sum_{m=0}^\infty ||A_m||<+\infty$.

Now it remains to prove that $\psi_\infty(s)$ in \eqref{psi_infinite_deterministic} is continuous at $s={\bf 0}$. Let us first recall the inequality $|e^u-1|\le e |u|$ for all $u\in  \nbC$ such that $|u|\le 1$. If $x=(x_1,...,x_k)$ is in ${\cal S}$, this entails that, for all $j=1,...,k$ and $m\in\nbN$,
\begin{equation}\label{aexp}
\left| \exp\left(   s_jx_j e^{-\alpha (L_j-m)}\nbu_{[L_j>m]}\right) - 1\right|\le e |s_j| x_j e^{-\alpha (L_j-m)}\nbu_{[L_j>m]} 
\end{equation}
for all $s_j$ such that $|s_j| x_j\le 1$, which is satisfied if $s=(s_1,...,s_k)\in [-i/K,i/K]^k\subset \nbS$. Letting $u_m^j(s_j):=\nbE \left( \exp\left(  s_jx_j e^{-\alpha (L_j-m)}\nbu_{[L_j>m]}\right)\right)$, we deduce from \eqref{aexp} that
\begin{equation}\label{psi_infinite_deterministic2}
\begin{array}{rcl}
|u_m^j(s_j)-1|&\le& \nbE(e |s_j| x_j e^{-\alpha (L_j-m)}\nbu_{[L_j>m]} )\le e |s_j| x_j \nbP(L_j>m),\\
|u_m^j(s_j)|&\le &e |s_j| x_j \nbP(L_j>m) +1 \le e+1:=M,
\end{array}
\end{equation}
for all $m\in\nbN$, $s_j\in [-i/K,i/K]$ and $j=1,...,k$. Then it follows from \eqref{psi_infinite_deterministic2} that for all $m\in\nbN$ and $s=(s_1,...,s_k)\in [-i/K,i/K]^k$, again by independence of $L_1,...,L_k$,
\begin{eqnarray*}
&&\bigg|\nbE \bigg( \exp\bigg\{ \sum_{j=1}^k s_jx_j e^{-\alpha (L_j-m)}\nbu_{[L_j>m]}\bigg\}\bigg)-1\bigg|=\bigg| \prod_{j=1}^k u_m^j(s_j) -1\bigg| = \bigg|\sum_{r=1}^k \bigg[\prod_{j=1}^{r-1} u_m^j(s_j)\bigg]\left[ u_m^r(s_r)-1\right]\bigg| \\
&\le& \sum_{r=1}^k  \prod_{j=1}^{r-1} \left|  u_m^j(s_j) \right| \left| u_m^r(s_r)-1 \right|
\le \sum_{r=1}^k M^{r-1} e |s_r| x_r \nbP(L_r>m),
\end{eqnarray*}
which, summing from $m=0$ to $+\infty$, yields the following bound for the $|.|_E$ norm for all $x=(x_1,...,x_k)\in {\cal S}$
$$
\bigg|\bigg(\nbE \bigg( \exp\bigg\{ \sum_{j=1}^k s_jx_j e^{-\alpha (L_j-m)}\nbu_{[L_j>m]}\bigg\}\bigg)-1\bigg)_{m\in\nbN}\bigg|_E \le \sum_{r=1}^k M^{r-1} e |s_r| x_r \left(\nbE(L_r)+1\right)
$$
and $s=(s_1,...,s_k)\in [-i/K,i/K]^k$. The right-hand side of the above inequality tends to $0$ as $s\to {\bf 0}$, $s\in [-i/K,i/K]^k \subset \nbS$, and is valid for all $x=(x_1,...,x_k)\in {\cal S}$. By the definition of matrices $\tilde{\pi}(s,m)$, $m\in\nbN$, this immediately implies that
$$
|| (\tilde{\pi}(s,m)-I)_{m\in\nbN}||_E \longrightarrow 0,\quad s\to {\bf 0},\ s\in \nbS. 
$$
One then deduces from \cite[(2.20) in Corollary 2.1]{A86} that the infinite product $s\mapsto \tilde{\psi}_\infty (s)=\left[\prod_{m=0}^{\infty} \tilde{Q}(s,m) \right]'=\left[  \prod_{m=0}^{\infty} \left( P'+A_m(s)\right)\right]'$ is continuous at $s= {\bf 0}$, $s\in \nbS$. This completes the proof.
\end{proof}
One interesting consequence of Theorem \ref{theo_deterministic_psi} is that the limiting mgf is expressed conveniently when the $L_j$'s are bounded by some constant $M$. In that case, one has from \eqref{def_pi_Q_tilda} and (\ref{def_pi_Q_tilda1}) that $\tilde{\pi}(s,r)=I$ and $\tilde{Q}(s,r)=P'$ when $r\ge M$, and we thus obtain the following result for this particular case:
\begin{cor}
Suppose that $\tau=1$ a.s. and rvs $L_j$, $j=1,...,k$, are all upper bounded such that $L_j\le M$ a.s. for some deterministic $M\in\nbN^*$. Then the transient mgf in \eqref{transient_deterministic_arrival} simplifies as
$$
\tilde{\psi}(s,t)=\bigg[ \prod_{m=0}^{M-1} \tilde{Q}(s,m)\ (P')^{t-M} \bigg]',\qquad t\geq M,
$$
and the limiting mgf is given by
$$
\tilde{\psi}_\infty(s)= {\bf 1}\pi \bigg[ \prod_{m=0}^{M-1} \tilde{Q}(s,m)\bigg]'.
$$
\end{cor}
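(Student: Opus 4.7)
The plan is to reduce this to a direct consequence of Theorem \ref{theo_deterministic_psi} by exploiting the boundedness of the $L_j$'s to truncate the (otherwise infinite) matrix product. The key observation is the one already flagged by the authors just before the corollary: since $L_j\le M$ a.s.\ for every $j=1,\ldots,k$, we have $\nbu_{[L_j>r]}=0$ a.s.\ for all $r\ge M$. Plugging this into the definition \eqref{def_pi_Q_tilda} gives, for $r\ge M$, $\tilde{\pi}(s,r)=\mbox{diag}\left[\nbE(e^0),\ x\in{\cal S}\right]=I$, and hence $\tilde{Q}(s,r)=I\cdot P'=P'$ from \eqref{def_pi_Q_tilda1}. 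This is the single structural fact we will use.

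For the transient statement I would start from the closed-form formula \eqref{transient_deterministic_arrival}, write
$$
\prod_{m=0}^{t-1}\tilde{Q}(s,m)=\bigg[\prod_{m=0}^{M-1}\tilde{Q}(s,m)\bigg]\cdot\prod_{m=M}^{t-1}\tilde{Q}(s,m),
$$
and note that the right-hand factor collapses to a power of $P'$ by the observation above, yielding the desired expression after transposition. This part is purely bookkeeping: the only thing to watch is the counting of factors in the product (and the convention $\prod_{\emptyset}=I$ when $t$ equals the threshold).

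For the limiting mgf, I would invoke the expression \eqref{psi_infinite_deterministic} and similarly split
$$
\prod_{m=0}^{\infty}\tilde{Q}(s,m)=\bigg[\prod_{m=0}^{M-1}\tilde{Q}(s,m)\bigg]\cdot\lim_{n\to\infty}(P')^{n}.
$$
Since the Markov chain $(X_i)_{i\in\nbN}$ is assumed stationary and ergodic with finite state space and stationary distribution $\pi$, we have $P^{n}\to{\bf 1}\pi$ as $n\to\infty$ (convergence to the rank-one ergodic projection), so $(P')^n\to({\bf 1}\pi)'=\pi'{\bf 1}'$. Taking transpose of the whole expression and using $(AB)'=B'A'$ then produces the factor ${\bf 1}\pi$ in front of $\left[\prod_{m=0}^{M-1}\tilde{Q}(s,m)\right]'$, which is exactly the claimed form.

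There is essentially no real obstacle here; the corollary is a short book-keeping consequence of Theorem \ref{theo_deterministic_psi}. The only subtle point is ensuring that the infinite product in \eqref{psi_infinite_deterministic} may indeed be separated in this multiplicative way without any convergence issue — but since all but finitely many factors are equal to the single matrix $P'$, and $(P')^n$ converges by ergodicity, this is immediate and no appeal to the general convergence machinery from \cite{A86} used in Theorem \ref{theo_deterministic_psi} is needed in this bounded-$L_j$ case.
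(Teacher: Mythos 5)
Your approach is exactly what the paper intends: the remark preceding the corollary already records that $\tilde{\pi}(s,r)=I$ and $\tilde{Q}(s,r)=P'$ for $r\ge M$, and one then splits the products in \eqref{transient_deterministic_arrival} and \eqref{psi_infinite_deterministic} at $m=M$ and collapses the tail. Your treatment of the limiting mgf is correct, including the observation that the convergence machinery from \cite{A86} is unnecessary here since only finitely many factors differ from $P'$ and $(P')^n\to({\bf 1}\pi)'$ by ergodicity; the transposition step $\left[\prod_{m=0}^{M-1}\tilde{Q}(s,m)\,({\bf 1}\pi)'\right]'={\bf 1}\pi\left[\prod_{m=0}^{M-1}\tilde{Q}(s,m)\right]'$ is right.

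For the transient statement, however, you flag the factor count as ``the only thing to watch'' and then do not watch it. Starting from $\tilde{\psi}(s,t)=\left[\prod_{m=0}^{t-1}\tilde{Q}(s,m)\right]'$ and splitting at $m=M$, the tail $\prod_{m=M}^{t-1}\tilde{Q}(s,m)$ has $t-M$ factors, each equal to $P'$, so your decomposition gives $\tilde{\psi}(s,t)=\left[\prod_{m=0}^{M-1}\tilde{Q}(s,m)\,(P')^{t-M}\right]'$ for $t\ge M$. That is not what the corollary states, which has $(P')^{t+1-M}$ for $t\ge M-1$. The two are genuinely different: at $t=M$ the stated formula carries an extra factor $P'$, and at $t=M-1$ it would require $\tilde{Q}(s,M-1)=P'$, which fails whenever $\nbP(L_j>M-1)>0$ for some $j$ (e.g.\ $L_j\equiv M$). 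So your split does not ``yield the desired expression after transposition''; it yields $(P')^{t-M}$ for $t\ge M$. Either the corollary as printed has an off-by-one slip or you have misread the theorem, but either way, before asserting that the bookkeeping produces the stated formula you should carry the bookkeeping out --- the single point you identified as delicate is precisely the one that does not check out as written.
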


\section{Application: Infinite server queues modulated by an external semi-Markovian process}\label{sec:application}
The model described in Section \ref{sec:model} is flexible enough to study the following process in queueing theory and actuarial science. We consider here a semi-Markov process $\{Y(t),\ t\ge 0\}$ with finite state space $\{1,...,\kappa \}$, jump times $(T_i)_{i\in\nbN}$ such that $(T_i-T_{i-1})_{i\in\nbN^*}$ is iid distributed as $\tau$ with cdf $F$, and the embedded Markov Chain $\{ Y(T_n),\ n\in\nbN\}$ having transition matrix and stationary distribution denoted by $P_Y=(p_Y(\ell, m))_{\ell,m=1,...,\kappa}$ and $\pi_Y=(\pi_Y(\ell))_{\ell=1,...,\kappa}$ respectively. Let us suppose that $\{Y(t),\ t\ge 0\}$ models the arrival of customers or claims, such that the $n$th arriving customer has service time/delay $L^\ast_{n,(Y(T_{n-1}),Y(T_n))}$ where $(L^\ast_{n,(\ell, m)})_{n\in\nbN, \ell,m=1,...,\kappa}$ is an iid sequence of rvs $L^\ast_{n,(\ell, m)}$'s, $n\in\nbN,\ell,m=1,...,\kappa$. We also let $(L^\ast_{(\ell, m)})_{\ell,m=1,...,\kappa}$ a generic corresponding rv with LT denoted as $\scrL_{\ell,m}(u)=\nbE(e^{-u L^\ast_{(\ell,m)}})$ for $u\geq 0$. In other words, if $N_t$ denotes the number of clients arrived by time $t$, the $N_t$th customer has service time which depends on both states of the semi-Markov process at the switching time $T_{N_t}$ and prior to this switching time (i.e. depending on both states $Y(T_{N_t})$ and $Y(T_{N_t}-)$), as illustrated in Figure \ref{fig:modulating}. This model has potential applications in queueing theory where an incoming customer may inspect the state of the environment $Y(T_{N_t}-)$ before deciding to join the queue; in an actuarial setting, there are different reasons for the determination of reporting delay for the IBNR claims. In particular, in this case, this model allows random fluctuations in the underlying delay distribution influenced by external process. For example, policyholder may decide to delay the submission of claims under special circumstances such as the external environment process is in a particular state. Also note that some flexibility for this rv is available, for example one may have $\nbP(L^\ast_{(\ell, m)}=0)>0$, implying that a customer finding the environment in state $\ell$ before it switches to state $m$ decides not to join the queue with positive probability. 
\begin{figure}[!hbtp]%
\centering
\includegraphics[scale=0.85]{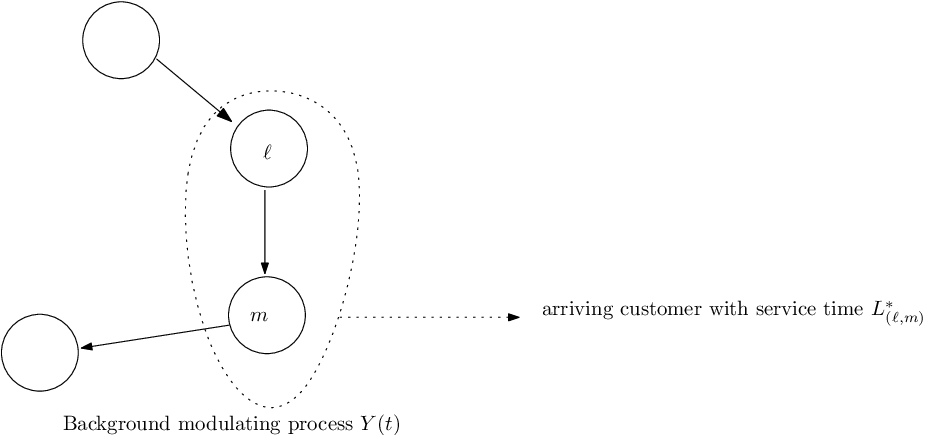}%
\caption{\label{fig:modulating} Modulating semi-Markov process and service time.}
\end{figure}

We then define the corresponding discounted processes $\{\Z(t),\ t\ge 0\}$ and $\{\tilde{\Z}(t),\ t\ge 0\}$ by
\begin{equation}\label{def_semiMarkov}
\Z(t):=\sum_{i=1}^{\infty} e^{-\alpha \left( T_i + L^\ast_{i,(Y(T_{i-1}),Y(T_i))}\right)}\nbu_{\left[ T_i\le t <  T_i + L^\ast_{i,(Y(T_{i-1}),Y(T_i))}\right]},\quad \tilde{\Z}(t)=e^{\alpha t}\Z(t).
\end{equation}
As such, the process defined in \eqref{def_semiMarkov} is different from the one introduced in \eqref{def_Z_t} because the arrival times and the service times are now modulated by some semi-Markov process. However in the following, we shall show that \eqref{def_semiMarkov} is actually embedded into \eqref{def_Z_t}, and this embedding procedure is essentially the central point of this section. Note in particular that this includes the particular case where $\{Y(t),\ t\ge 0\}$ is a continuous-time Markov Chain by considering $\tau\sim {\cal E}(\lambda)$ for some $\lambda>0$, of which infinitesimal generating matrix is given by $Q_Y=(q_Y(\ell ,m))_{\ell,m=1,...,\ka}$ with $q_Y(\ell,m)=\lambda p_Y(\ell,m)$ if $\ell\neq m$ and $q_Y(\ell,\ell)=-\lambda \sum_{m\neq \ell}p_Y(\ell,m)$. In that case, interarrivals may not be identically distributed by considering the generic rv $(L^\ast_{(\ell, m)})_{\ell,m=1,...,\kappa}$ to satisfy $L^\ast_{(\ell, \ell)}=0$ for all $\ell=1,...,\ka$, so that a new customer actually arrives exactly at each time when $Y(t)$ switches to a new state, with service time distributed as $L^\ast_{(\ell, m)}$ when switching from state $\ell$ to state $m$.

So, we need to define the corresponding Markov Chain $(X_i)_{i\in \nbN}$. Let us introduce for all $\ell$ and $m$ in $\{1,...,\kappa\}$ the $\ka \times \ka$ matrix $ e(\ell,m)$ of which the $(j,j')$th entry is $\delta_{(j,j'),(\ell,m)}$. We then define the state space of $(X_i)_{i\in \nbN}$ as
$$
{\cal S}=\left\{ e(\ell,m),\ (\ell,m)\in \{1,...,\kappa\}^2\right\}\subset \{0,1\}^{\ka \times \ka}
$$
so that one sets $k=\ka^2$ and $K=1$, sticking with the notation in Section \ref{sec:model}. Then for all $i\in\nbN$, $X_i=(X_{i, (j,j')})_{(j,j')\in \{1,...,\ka\}^2}$ is of the form $X_i=e(\ell,m)$ for some $\ell$ and $m$ in $\{1,...,\ka\}$, in which case one has
$$
X_{i, (j,j')} = \delta_{(j,j'),(\ell,m)},\quad \forall (j,j')\in \{1,...,\ka\}^2.
$$
The only difference here is in ${\cal S}$ which is a strict subset of $\{0,1\}^{\ka \times \ka}$, however this will not raise any additional technical difficulty in the following analysis. We then define the corresponding transition matrix as $P=(p(x,x'))_{(x,x')\in {\cal S}^2}$ with
\begin{equation}\label{def_transition_matrix_semi_Markov}
p(e(\ell,m), e(\ell',m'))=\left\{
\begin{array}{rl}
p_Y(\ell',m')& \mbox{if } m=\ell',\\
0 & \mbox{otherwise,}
\end{array}
\right.
\quad \ell,\ell',m,m'=1,...,\ka ,
\end{equation}
so that a transition from state $e(\ell,m)$ to state $e(\ell',m')$ of the Markov Chain $(X_i)_{i\in \nbN}$ is only possible if $m=\ell'\iff e(\ell,m) e(\ell',m')=e(\ell,m')$. One checks that \eqref{def_transition_matrix_semi_Markov} defines a proper transition matrix, i.e. the sum over each row is equal to $1$, and $(X_i)_{i\in \nbN}$ is stationary and ergodic iff $\{ Y(T_n),\ n\in\nbN\}$ is, with corresponding stationary distribution $(\pi(x))_{x\in {\cal S}}$ given by
\begin{equation}\label{embed_Markov}
\pi(x)=\pi(e(\ell,m))=p_Y(\ell,m)\pi_Y(\ell),\quad \forall x=e(\ell,m)\in {\cal S},\quad \ell,m=1,...,\ka .
\end{equation}
Finally, we let $(L_{i,(j,j')})_{i\in\nbN, (j,j')\in \{1,...,\ka\}^2}$ a sequence of independent rvs with corresponding distribution given by
$$
L_{i,(j,j')}\sim L^\ast_{i,(j,j')},\quad \forall (j,j')\in \{1,...,\ka\}^2 .
$$
We now arrive at the embedding result. We let $\{Z(t)=Z(t;\alpha)=(Z_{(j,j')}(t))_{(j,j')\in \{1,...,\ka\}^2}\in \nbR^{\{1,...,\ka\}^2} ,\ t\ge 0 \}$ and $\tilde{Z}(t)=e^{\alpha t}Z(t)$ defined by \eqref{def_Z_t}, i.e.
$$
Z_{(j,j')}(t)=\sum_{i=1}^\infty X_{i,(j,j')}e^{-\alpha (L_{i,(j,j')}+T_i)} \nbu_{[T_i \le t < T_i + L_{i,(j,j')}]},\quad (j,j')\in \{1,...,\ka\}^2 .
$$
Then, one checks immediately that the following relation between $\tilde{Z}(t)$ and $\tilde{\Z}(t)$ defined in \eqref{def_semiMarkov} holds
\begin{equation}\label{embedding}
\left\{\tilde{Z}_{(j,j')}(t),\ t\ge 0\right\}\stackrel{D}{=}\left\{\tilde{\Z}(t)\nbu_{[Y(T_{N_t}-)=j, Y(T_{N_t})=j']},\ t\ge 0\right\},\quad \forall (j,j')\in \{1,...,\ka\}^2.
\end{equation}
We remark that the above relation is interesting as it enables us to study $(\tilde{\Z}(t), Y(T_{N_t}-), Y(T_{N_t}) )$ through $\tilde{Z}(t)$ i.e. through the analysis developed in Section \ref{general_results}.
More precisely, one checks that
\begin{eqnarray}
&&\left(\M_{(j_0,j_1), (j_2,j_3)}^{(1)}(t)\right)_{(j_0,j_1)\in \{1,...,\ka\}^2, (j_2,j_3)\in \{1,...,\ka\}^2}\nonumber\\
&:=& \left( \nbE \left( \left.\tilde{\Z}(t) \nbu_{[Y(T_{N_t}-)=j_2, Y(T_{N_t})=j_3]} \right|\ Y(T_{-1})=j_0, Y(T_{0})=j_1\right)\right)_{(j_0,j_1)\in \{1,...,\ka\}^2, (j_2,j_3)\in \{1,...,\ka\}^2}\nonumber\\
 &=&\left(  \nbE \left( \left.\tilde{Z}_{(j_2,j_3)}(t)\right| X_0= e(j_0,j_1) \right) \right)_{(j_0,j_1)\in \{1,...,\ka\}^2, (j_2,j_3)\in \{1,...,\ka\}^2}\nonumber\\
 &=& \left(  M_{(j_2,j_3)}(t){\bf 1} \right)_{(j_2,j_3)\in \{1,...,\ka\}^2}\label{embed_M1}
\end{eqnarray}
where $M_{j}(t)$ is defined in \eqref{def_moments}. Similarly, one can consider for all $j_0$, $j_1$, $j_2$ and $j_3$ the second moment $\M_{(j_0,j_1), (j_2,j_3)}^{(2)}(t):=\nbE \left( \left.\tilde{\Z}(t)^2 \nbu_{[Y(T_{N_t}-)=j_2, Y(T_{N_t})=j_3]} \right|\ Y(T_{-1})=j_0, Y(T_{0})=j_1\right)$, which verifies
\begin{equation}\label{embed_M2}
\left(\M_{(j_0,j_1), (j_2,j_3)}^{(2)}(t)\right)_{(j_0,j_1)\in \{1,...,\ka\}^2, (j_2,j_3)\in \{1,...,\ka\}^2}=\left(  M_{(j_2,j_3),(j_2,j_3)}(t){\bf 1} \right)_{(j_2,j_3)\in \{1,...,\ka\}^2},
\end{equation}
where $M_{jj'}(t)$ is also defined in \eqref{def_moments}. Also, for the workload ${\cal D}(t)$ defined as
$$
{\cal D}(t):= \sum_{i=1}^{\infty} \left(T_i + L^\ast_{i,(Y(T_{i-1}),Y(T_i))}-t\right)\nbu_{\left[ T_i\le t <  T_i + L^\ast_{i,(Y(T_{i-1}),Y(T_i))}\right]}
$$
(which is related with $D(t)$ in Section \ref{sec:workload} as similar to \eqref{embedding}, i.e. its expectation jointly to $(Y(T_{N_t}-), Y(T_{N_t}))$ is given by
\begin{eqnarray*}
&&\left( {\cal W}_{(j_0,j_1), (j_2,j_3)}(t)\right)_{(j_0,j_1)\in \{1,...,\ka\}^2, (j_2,j_3)\in \{1,...,\ka\}^2}\\
&:=&\left( \nbE \left( \left.{\cal D}(t)\nbu_{[Y(T_{N_t}-)=j_2, Y(T_{N_t})=j_3]} \right|\ Y(T_{-1})=j_0, Y(T_{0})=j_1\right)\right)_{(j_0,j_1)\in \{1,...,\ka\}^2, (j_2,j_3)\in \{1,...,\ka\}^2}\\
&=&\left(  W_{(j_2,j_3)}(t){\bf 1} \right)_{(j_2,j_3)\in \{1,...,\ka\}^2},
\end{eqnarray*}
where $W_{j}(t)$ is defined in \eqref{Wit}. 

Furthermore, one could find the following relation between the joint mgf of $\tilde{\Z}(t)$ and distribution of $(Y(T_{N_t}-),Y(T_{N_t}))$ defined for all $z\in \nbR$ and $t\ge 0$ by
$$
\left[ \tilde{\Psi}(z,t)\right]_{(j_0,j_1), (j_2,j_3)}=\nbE \left( \left. e^{z \tilde{\Z}(t)}\nbu_{[Y(T_{N_t}-)=j_2,Y(T_{N_t})=j_3]}\right| Y(T_{-1})=j_0,\ Y(T_0)=j_1 \right),
$$
where $j_0$, $j_1$, $j_2$ and $j_3$ are in $\{1,...,\ka \}$. Then one notices that the above mgf is linked to the joint mgf of $\tilde{\psi}(s,t)$ of $\left\{(\tilde{Z}_{(j,j')}(t))_{j,j'=1,...,\ka},\ t\ge 0\right\}$ thanks to \eqref{embedding} by the relation
\begin{equation}\label{rel_embed_psi}
\left[ \tilde{\Psi}(z,t)\right]_{(j_0,j_1), (j_2,j_3)}= \left[ \tilde{\psi}(z. e(j_2,j_3),t)\right]_{(j_0,j_1), (j_2,j_3)},
\end{equation}
where we recall that $e(j_2,j_3)$ is the $\{1,...,\ka \}\times \{1,...,\ka \}$ matrix of which the $(j,j')$th entry is $\delta_{(j,j'),(j_2,j_3)}$.

The embedding relations \eqref{embed_M1}, \eqref{embed_M2} and \eqref{rel_embed_psi} thus enable us to derive results on the present model with semi-Markovian input, in particular thanks to Sections \ref{sec:first_second_workload} and \ref{sec:particular_cases}. Let us present some of them in the following. For example, \eqref{embed_M1} together with Proposition \ref{prop_asymptotic_moment_general_vector} yields the asymptotic first moment given by
\begin{multline*}
\left(\M_{(j_0,j_1), (j_2,j_3)}^{(1)}(t)\right)_{(j_0,j_1)\in \{1,...,\ka\}^2, (j_2,j_3)\in \{1,...,\ka\}^2}  \\
\longrightarrow  \left( \frac{1}{\nbE(\tau)}\frac{1-\scrL_{(j_2,j_3)}(\alpha)}{\alpha}\ \nbE(X_{(j_2,j_3)}){\bf 1}\right)_{(j_2,j_3)\in \{1,...,\ka\}^2},
\end{multline*}
as $t\to\infty$, where $ \nbE(X_{(j_2,j_3)})=\pi(e(j_2,j_3))=p_Y(j_2,j_3)\pi_Y(j_2)$ by \eqref{embed_Markov}. When service times are exponentially distributed with $L^\ast_{(j_2,j_3)}\sim {\cal E}(\mu_{(j_2,j_3)})$,  then \eqref{embed_M2} and Theorem \ref{theo_second_limit_moment_expo} results in
\begin{multline*}
\left(\M_{(j_0,j_1), (j_2,j_3)}^{(2)}(t)\right)_{(j_0,j_1)\in \{1,...,\ka\}^2, (j_2,j_3)\in \{1,...,\ka\}^2}\longrightarrow\\
\frac{1}{\nbE(\tau)} \bigg[\frac{1-{\cal L}^\tau (\mu_{(j_2,j_3)})}{\mu_{(j_2,j_3)}+2\alpha}\bigg] {\bf 1}\pi\Delta_{(j_2,j_3)}^2 P \ \left( I-{\cal L}^\tau(\mu_{(j_2,j_3)} )P\right)^{-1} \\
+ \frac{2 {\cal L}^\tau (\mu_{(j_2,j_3)} ) }{\nbE(\tau)}\bigg[ \frac{1- {\cal L}^\tau (2\mu_{(j_2,j_3)} )}{2\mu_{(j_2,j_3)} }\bigg]
\bigg(\frac{\mu_{(j_2,j_3)}}{\mu_{(j_2,j_3)}+\alpha} \bigg)^2{\bf 1}\pi\Delta_{(j_2,j_3)} P
\left( I-{\cal L}^\tau(\mu_{(j_2,j_3)} )P\right)^{-1}\\
.\Delta_{(j_2,j_3)} P \  \left( I-{\cal L}^\tau(2\mu_{(j_2,j_3)} )P\right)^{-1},
\end{multline*}
$t\to\infty$. When $\tau\sim {\cal E}(\lambda)$, i.e. when arrivals occur according to a Poisson process and the model is Markov modulated, the transient moment is explicit thanks to \eqref{exact_expression_Mi_Poisson} in Theorem \ref{prop_exact_expression_Mi_Poisson} and one computes easily for all $j_2$ and $j_3$, using $PP'\Delta_\pi {\bf 1}={\bf 1}$, $e^{\lambda v P}{\bf 1}=e^{\lambda v}{\bf 1}$, that
\begin{multline*}
\left(\M_{(j_0,j_1), (j_2,j_3)}^{(1)}(t)\right)_{(j_0,j_1)\in \{1,...,\ka\}^2}=M_{(j_2,j_3)}(t){\bf 1}\\
=\lambda e^{\lambda t (P-I)} \int_0^t   \nbE \left( e^{-\alpha (L_j-v)}\nbu_{[L_j>v]}\right)  e^{-\lambda v (P-I)}   \Delta_j  P e^{\lambda v (P-I)}dv {\bf 1},\quad t\ge 0 .
\end{multline*}
Further, when interarrival are deterministic equal to $1$, then \eqref{rel_embed_psi} and Theorem \ref{theo_deterministic_psi} entail that the limiting joint mgf is given for all $(j_0,j_1)$, $(j_2,j_3)$ in $\{1,...,\ka \}^2$, by
$$
\lim_{t\to \infty}\big[ \tilde{\Psi}(z,t)\big]_{(j_0,j_1), (j_2,j_3)}=\bigg[\prod_{i=0}^{\infty} \tilde{Q}(z. e(j_2,j_3),i) \bigg]_{(j_0,j_1), (j_2,j_3)}'.
$$

\section*{Acknowledgements}
The authors would like to thank two anonymous reviewers for their helpful comments and suggestions. This work was supported by Joint Research Scheme France/Hong Kong Procore Hubert Curien grant No 35296, F-HKU710/15T, and the UNSW Business School 2018 International Research Collaboration Travel Funds. Also, Jae-Kyung Woo gratefully acknowledges the support from and 2019 Business School Research Grant.

\bibliographystyle{alpha}

\begin{thebibliography}{}

\end{thebibliography}


\begin{thebibliography}{10}

\bibitem{A86}
Artzrouni, M. (1986)
\newblock On the convergence of infinite products of matrices.
\newblock \textit{Linear Algebra and its Applications}, 74: 11--21.

\bibitem{AM75}
Athreya, A.B. \& Ramamurthy, K. (1976)
\newblock Feller's renewal theorem for systems of renewal equations.
\newblock \textit{Journal of the Indian Institute of Science}, 58: 437--459.



\bibitem{BDTM17}
Blom, J., De Turck K., \& Mandjes, M. (2017)
\newblock Refined large deviations asymptotics for Markov-modulated
infinite-server systems.
\newblock \textit{European Journal of Operation Research}, 259: 1036--1044.

\bibitem{BKMT14}
Blom, J., Kella, O., Mandjes, M., \& Thorsdottir, H. (2014)
\newblock Markov-modulated infinite-server queues with general service times.
\newblock \textit{Queueing Systems},
76: 403--424.

\bibitem{D08}
D'Auria, B. (2008)
\newblock $M/M/\infty$ queues in semi-Markovian random environment.
\newblock \textit{Queueing Systems}, 58: 221--237.

\bibitem{FG04}
Francq, C. \& Gautier, A. (2004)
\newblock Estimation of time-varying ARMA models with Markovian changes in regime.
\newblock \textit{Statistics and Probability Letters}, 70: 243--251.

\bibitem{FA09}
Fralix, B.H. \& Adan, I.J.B.F. (2009)
\newblock An infinite-server queue influenced by a semi-Markovian environment.
\newblock \textit{Queueing Systems}, 61: 65--84.


\bibitem{K74}
Karlsson, J.-E. (1974)
\newblock A stochastic model for time lag in reporting of claims
\newblock \textit{Journal of Applied Probability}, 11(2): 382--387.










\bibitem{MDT16}
Mandjes, M. \& De Turck, K. (2016)
\newblock Markov-modulated infinite-server queues driven by a
common background process.
\newblock \textit{Stochastic Models}, 32(2): 206--232.

\bibitem{MT02}
Masuyama, H. \& Takine, T. (2002)
\newblock Analysis of an infinite-server queue with batch markovian arrival streams.
\newblock \textit{Queueing Systems}, 42(3): 269--296.

\bibitem{OP86}
O'Cinneide, C., \& Purdue, P. (1986)
\newblock The $M/M/\infty$ in a random environment.
\newblock \textit{Journal of Applied Probability}, 23: 175--184.

\bibitem{RW17}
Rabehasaina, L. \& Woo, J.-K. (2018)
\newblock On a multivariate renewal-reward process involving time delays and discounting: Applications to IBNR process and infinite server queues.
\newblock \textit{Queueing Systems}, 90(3): 307--350.


\bibitem{T62}
Tak\'acs, L. (1962)
\newblock {\it Introduction to the theory of queues}.
\newblock Oxford University press. 


\bibitem{W90} 
Willmot, G.E. (1990)
\newblock A queueing theoretic approach to the analysis of the claims payment process. 
\newblock\emph{Transaction of Society of Actuaries}, 42: 447-497.

\bibitem{WD01} 
Willmot, G.E. \& Drekic, S. (2001)
\newblock On the transient analysis of the $M^X/M/\infty$ queue. 
\newblock\emph{Operations Research Letters}, 28: 137-142.

\bibitem{W16}
Woo, J.-K. (2016)
\newblock On multivariate discounted compound renewal sums with time-dependent claims in the presence of reporting/payment delays.
\newblock \textit{Insurance: Mathematics and Economic}, 70: 354-363.



\end{thebibliography}



\end{document}